\newcommand{\wit}{\widetilde}
\newcommand{\calU}{{\mathcal{U}}}
\newcommand{\calC}{{\mathcal{C}}}
\numberwithin{equation}{section}
\newtheorem{Prop}[equation]{Proposition}
\newtheorem{Lem}[equation]{Lemma}
\newtheorem{Def}[equation]{Definition}
\newtheorem{Thm}[equation] {Theorem}
\newtheorem{Cor}[equation]{Corollary}
\newtheorem{Rem}[equation]{Remark}
\title
[Computation of Certain Galois Groups]
{Hilbert's Irreducibility, Modular Forms, and  Computation of Certain Galois Groups}
\author{Iva Kodrnja}
\address{
	Faculty of Geodesy, 
	University of Zagreb,
	Ka\v ci\' ceva 26, 10000 Zagreb,
	Croatia}
\email{ikodrnja@geof.hr}
\author{ Goran Mui\'c}
\address{
  Department of Mathematics,
  Faculty of Sciences,
University of Zagreb,
Bijeni\v cka 30, 10000 Zagreb,
Croatia}
 \email{gmuic@math.hr}
\begin{document}

\begin{abstract}
In this paper we discuss applications of our earlier work  in studying
certain Galois groups and splitting fields of rational functions in $\mathbb Q\left(X_0(N)\right)$ using Hilbert's irreducibility theorem and modular forms. We also consider computational aspect of the
problem using MAGMA and SAGE.
\end{abstract}

\subjclass{11F11, 11F23}
\keywords{}

\subjclass[2000]{11F11}
\keywords{modular forms,  modular curves, birational equivalence, Galois groups, Hilbert's irreducibility theorem}
\thanks{The  second author acknowledges Croatian Science Foundation grant IP-2018-01-3628.}
\maketitle

\section{Introduction}\label{intr}

In this paper we consider congruence subgroups $\Gamma_0(N)$, $N\ge 1$, and the corresponding compact Riemann surface $X_0(N)$ which we initially consider as a complex irreducible smooth projective curve.
The $\mathbb Q$--structure on $X_0(N)$ is defined in a standard way using $j$--function i.e., the field of rational functions over $\mathbb Q$ on $X_0(N)$ is given by
$\mathbb Q\left(X_0(N)\right)= \mathbb Q(j, j(N \cdot))$.
We elaborate on equivalent ways of defining the $\mathbb Q$--structure in Section \ref{agr} (see Lemma \ref{agr-2} and its proof). This should be well--known but it is hard to find a
convenient reference. These are technical results used in the proof of the main results below.  In our earlier paper \cite{Muic2} we gave fairly general study  of complex holomorphic maps
$X_0(N)\longrightarrow \mathbb P^2$ and developed the test for  birationality in terms of modular forms of various even weights. The methods of that paper are further extended and combined with techniques 
of explicit computations in SAGE in our paper \cite{MuKo}. The paper \cite{MuKo} constructs various models over $\mathbb C$ of $X_0(N)$ complementing previous works such as \cite{BKS}, \cite{bnmjk},
\cite{sgal}, \cite{ishida}, \cite{Kodrnja1}, \cite{Muic}, \cite{MuMi}, \cite{mshi} and \cite{yy}. In the present paper we describe another application of  \cite{Muic2} and \cite{MuKo} where we justify
why it is interesting to study plane models of $X_0(N)$ of various type not necessarily of smallest possible degree or one with smallest possible coefficients.  By Eichler--Shimura theory
\cite[Theorem 3.5.2]{shi} and explicit determination of certain Eisenstein series in  \cite[Chapter 7]{Miyake}, we know that $S_m(\Gamma_0(N))$,  and
$M_m(\Gamma_0(N))$, for $m\ge 2$ even,  have basis consisting of forms with integral $q$--expansions. So, if we take that $f, g, h$ are linearly independent  modular forms
with rational $q$--expansions for $\Gamma_0(N)$, we can construct an irreducible over $\mathbb Z$ homogeneous polynomail with integral coefficients $P_{f, g, h}$ such that $P_{f, g, h}(f, g, h)=0$ in
$\mathbb Q\left(X_0(N)\right)$. Let $Q_{f, g, h}$ be its dehomogenization with respect to the last variable. Again, we obtain
an irreducible over $\mathbb Z$ polynomial with integral coefficients. By means of Hilbert's irreducibility theorem (see \cite{Serre}, summarized in  Lemma \ref{agr-4}),
we obtain a family of irreducible over $\mathbb Z$
polynomails with integral coefficients $Q_{f, g, h}(\lambda, \cdot)$, where $\lambda$ ranges over a certain thin set (see \cite[Proposition 3.3.5 and page 26, part 3]{Serre} or Lemma \ref{agr-4} here).
In this way we obtain a family of number fields determined as splitting fields of these polynomials all of them have the same Galois group $G_{f, g, h}$ which is the Galois group of the splitting field of
$Q_{f, g, h}(g/f, \cdot)$ over $\mathbb Q(g/f)$.  The goal of present paper is to study 
these objects as an  application of the theory developed in \cite{Muic2} and \cite{MuKo}. We should observe that the field  $\mathbb Q(g/f)$ is just the field of rational functions 
one variable over $\mathbb Q$, so the reader might wonder how the study of Galois groups in the present paper differ for example from  that in \cite{Serre}. The reason is that modular forms
are great source of concrete irreducible polynomials $Q_{f, g, h}(\lambda, \cdot)$ that are more restrictive than general polynomials over the field $\mathbb Q(\lambda)$ ($\lambda$ is a variable here).
The reader should consult Section \ref{cnmc} about many results for $\Gamma_0(72)$ which shows that.

We describe the content of the paper. In Section \ref{agr-hit} we explain how to use Hilbert's irreducibility (see Lemma \ref{agr-4}) and a technique of Frobenius on reducing modulo various primes
(see Theorem \ref{hit-main}) in order to compute $G_{f, g, h}$.  In Section \ref{agr},  we adapt  the results of paper \cite{MuKo} (see Lemmas \ref{agr-2}, \ref{agr-6}, and \ref{agr-8})
in order to study how the Galois group $G_{f, g, h}$ behaves when we fix $f$ and $g$ and let $h$ vary under certain assumptions on $m$ and $N$. The main result is contained in Theorems \ref{agr-1-a} and
\ref{agr-1}. They are of general nature  and they are illustrated by more concrete results in Section \ref{cnmc} for $\Gamma_0(63)$, $\Gamma_0(64)$, and especially for $\Gamma_0(72)$.
In the Section \ref{cmcg}, we improve  on Theorem \ref{agr-1} in the  case of hyperelliptic modular curves over $\mathbb Q$  (see Theorem \ref{cmcg-1}).
The section ends with an interesting example for $\Gamma_0(30)$.
In Section \ref{cnmc}, we discuss non--hyperelliptic modular curves (see Theorem \ref{sec-1}). We give several elegant computations of Galois groups $G_{f, g, h}$ based on methods and results of
Section \ref{agr-hit}. The results are contained in Proposition \ref{sec-3} (the case $\Gamma_0(63)$),  Proposition \ref{sec-4} (the case $\Gamma_0(64)$), and in Propositions \ref{sec-5},
\ref{sec-6}, and \ref{sec-7}  (the case $\Gamma_0(72)$).  The key result of that section is also Remark \ref{sec-8} related to $\Gamma_0(72)$.
The Galois groups of polynomials $\wit{Q}(\lambda, \cdot)$ (a normalized version of $Q(\lambda, \cdot)$, see Section \ref{agr-hit}) over $\mathbb Q(\lambda)$ can also be computed using MAGMA system and a
routine   {\it GaloisGroup}. It is also interesting to use new MAGMA routine {\it GeometricGaloisGroup} (see \cite{fiekerSutherland} for implementation) which give  the Galois groups of
polynomials $\wit{Q}(\lambda, \cdot)$ over   $\mathbb C(\lambda)$ and compare the two.  (For MAGMA implementations (and background) the reader should consult papers  \cite{fieker},
\cite{fiekerSutherland}, \cite{fiekerKluners}, and \cite{KrummSut}.)
They are not directly used in the paper. Nevertheless, we have made many experiments in MAGMA and SAGE related to the results of the present paper. 
In the final Section \ref{tpoal} we reprove  \cite[Lemma 2]{mshi} (needed in the proof of
Theorem \ref{cmcg-1}) using more elegant approach based on \cite{Muic2} combined with certain simple computations in SAGE.

 We would like to thank Nicole Sutherland for several useful conversations regarding the  implementation in MAGMA of routines for
 computing Galois groups of polynomials with coefficients in the function fields  over  $\mathbb Q$ and $\mathbb C$.

 \section{An Application of Hilbert's Irreducibility Theorem}\label{agr-hit}
 The goal of the present section is to set-up the notation for the rest of the paper and to prove Theorem \ref{hit-main}.

For  an even integer $m\geq 2$, let $M_m(\Gamma_0(N))$ (resp., $S_m(\Gamma_0(N))$)
be the space of (resp., cuspidal) modular forms of weight $m$ for $\Gamma_0(N)$.  Assume $\dim M_m(\Gamma_0(N))\geq 3$.
Let $f,g,h$ be three linearly independent modular forms in $M_m(\Gamma_0(N))$. Then, we define a  holomorphic map $X_0(N)\to \Bbb P^2$ uniquely determined by 
\begin{equation}\label{map}
\mathfrak{a}_z\longmapsto (f(z):g(z):h(z)), \ \ z\in \mathbb H.
\end{equation}
Since  $X_0(N)$ has a canonical structure of complex projective irreducible algebraic curve, this map can be regarded as a regular map between projective varieties. Consequently, the image
is an irreducible projective curve  which we denote by $\calC(f,g,h)$. Let $P$ be the reduced equation of $\calC(f,g,h)$ i.e., an irreducible homogeneous polynomial in three variables with complex
coefficients such that $P(f(z), g(z), h(z))=0$,  for all $z\in \mathbb H$.  It is unique up to multiplication with non--zero complex numbers. Its dehomogenization
$Q\overset{def}{=} P(1, \cdot, \cdot) $ is also an irreducible polynomial over $\mathbb C$, and it satisfies
$Q\left(g/f, h/f\right)=0$ in the field of rational functions $\mathbb C\left(X_0(N)\right)$. It is very easy to see that $P$ is homogenization of $Q$ i.e., $P$ is uniquely determined. 
The polynomial $Q$ must depend on both variables or otherwise linear independence of $f$, $g$, and $h$ is violated. We see also that
$Q\left(g/f, . \right)$ is an irreducible polynomial of $h/f\in \mathbb C\left(X_0(N)\right)$ over the subfield
$\mathbb C\left(g/f\right)$ generated by $g/f$. Consequently, the field extensions satisfy 
$\left[\mathbb C\left(g/f, \  h/f\right) : \mathbb C\left(g/f\right)\right]= \deg{\left(Q\left(g/f, \cdot \right)\right)}$.

The next  lemma is important  for our considerations. We recall that  by Eichler--Shimura theory
\cite[Theorem 3.5.2]{shi} and explicit determination of certain Eisenstein series in  \cite[Chapter 7]{Miyake}, we know that $S_m(\Gamma_0(N))$,  and
$M_m(\Gamma_0(N))$, for $m\ge 2$ even,  have basis consisting of forms with integral $q$--expansions.

\begin{Lem} \label{agr-8} Let $f,g,h$ be three linearly independent modular forms in $M_m(\Gamma_0(N))$  having rational $q$--expansions. 
  Then, we may select that the polynomial $P$ (and consequently $Q$) has rational coefficients. Moreover, $Q\left(g/f, \cdot \right)$ is an irreducible polynomial of
  $h/f$ over  $\mathbb Q\left(g/f\right)$. Consequently, we have
  $\left[\mathbb Q\left(g/f, \  h/f\right) : \mathbb Q\left(g/f\right)\right]=\left[\mathbb C\left(g/f, \  h/f\right) : \mathbb C\left(g/f\right)\right]= \deg{\left(Q\left(g/f, \cdot \right)\right)}$.
\end{Lem}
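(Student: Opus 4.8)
The plan is to establish the rationality of $P$ first, and then derive the degree equalities by comparing two irreducible polynomials over $\mathbb{Q}$ and $\mathbb{C}$ respectively. For the rationality: since $f,g,h$ have rational $q$--expansions, the relation $P(f,g,h)=0$ translates, after substituting $q$--expansions and collecting coefficients of powers of $q$, into a homogeneous linear system over $\mathbb{Q}$ in the coefficients of $P$. More precisely, for a given degree $d=\deg P$, write $P=\sum_{i+j+k=d} c_{ijk} X^i Y^j Z^k$; then $P(f,g,h)$ is a holomorphic modular form of weight $md$ whose $q$--expansion coefficients are $\mathbb{Q}$--linear (indeed $\mathbb{Z}$--linear, given integral $q$--expansions) combinations of the $c_{ijk}$. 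A modular form vanishes iff finitely many of its $q$--expansion coefficients vanish (up to the Sturm bound), so the condition $P(f,g,h)=0$ is a finite homogeneous $\mathbb{Q}$--linear system on the vector $(c_{ijk})$. We already know this system has a nonzero complex solution (the polynomial $P$ exists over $\mathbb{C}$), hence it has a nonzero rational solution; moreover, irreducibility of $\calC(f,g,h)$ forces its reduced equation to be, up to scalar, the unique (up to scalar) solution, so any rational solution is a rational scalar multiple of $P$. Clearing denominators we may and do take $P$, and hence its dehomogenization $Q=P(1,\cdot,\cdot)$, to have rational (even integral) coefficients.

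Next I would prove that $Q(g/f,\cdot)$ is irreducible over $\mathbb{Q}(g/f)$ and that the degrees agree. Regard $g/f$ as a transcendental over $\mathbb{Q}$ inside $\mathbb{C}(X_0(N))$; the subfield $\mathbb{Q}(g/f)\subseteq\mathbb{C}(X_0(N))$ is a rational function field in one variable. Since $Q$ has rational coefficients and $Q(g/f,h/f)=0$, the element $h/f$ is algebraic over $\mathbb{Q}(g/f)$, and $Q(g/f,\cdot)$ is a polynomial over $\mathbb{Q}(g/f)$ annihilating it; thus $[\mathbb{Q}(g/f,h/f):\mathbb{Q}(g/f)] \le \deg(Q(g/f,\cdot))$. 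On the other hand, from the discussion preceding the lemma we already have $[\mathbb{C}(g/f,h/f):\mathbb{C}(g/f)]=\deg(Q(g/f,\cdot))$ because $Q(g/f,\cdot)$ is irreducible over $\mathbb{C}(g/f)$. Since $\mathbb{C}(g/f,h/f)=\mathbb{C}(X_0(N))$ is a constant field extension of $\mathbb{Q}(g/f,h/f)=\mathbb{Q}(X_0(N))$ and $\mathbb{C}$ is algebraically closed (in particular $\mathbb{C}/\mathbb{Q}$ is a regular extension), base change to $\mathbb{C}$ preserves the degree of the extension: $[\mathbb{C}(g/f,h/f):\mathbb{C}(g/f)]=[\mathbb{Q}(g/f,h/f):\mathbb{Q}(g/f)]$. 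Chaining the (in)equalities gives $\deg(Q(g/f,\cdot)) = [\mathbb{Q}(g/f,h/f):\mathbb{Q}(g/f)] \le \deg(Q(g/f,\cdot))$, forcing equality throughout; in particular $Q(g/f,\cdot)$ is the minimal polynomial of $h/f$ over $\mathbb{Q}(g/f)$, hence irreducible there, and all three quantities in the displayed formula coincide.

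The one point that deserves care — and which I expect to be the main obstacle to state cleanly rather than to prove — is the preservation of extension degree under the scalar extension from $\mathbb{Q}$ to $\mathbb{C}$ of the function field. This is the assertion that $\mathbb{Q}(X_0(N))\otimes_{\mathbb{Q}}\mathbb{C}$ is a field (equivalently, that $\mathbb{C}$ is linearly disjoint from $\mathbb{Q}(X_0(N))$ over $\mathbb{Q}$, i.e.\ $\mathbb{Q}$ is algebraically closed in $\mathbb{Q}(X_0(N))$ and this field is separable over $\mathbb{Q}$). Both are standard facts about the $\mathbb{Q}$--structure on $X_0(N)$: the modular curve $X_0(N)$ is geometrically irreducible over $\mathbb{Q}$, so $\mathbb{Q}$ is algebraically closed in $\mathbb{Q}(X_0(N))=\mathbb{Q}(j,j(N\cdot))$, and characteristic zero makes separability automatic. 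Given this, the tower $\mathbb{Q}(g/f)\subseteq\mathbb{Q}(g/f,h/f)$ tensored with $\mathbb{C}$ over $\mathbb{Q}$ stays a field extension of the same degree, which is exactly what the argument needs. With that input the proof is complete, and the same reasoning will later let us descend Galois groups and splitting fields from $\mathbb{C}(\lambda)$ to $\mathbb{Q}(\lambda)$.
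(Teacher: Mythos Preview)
Your rationality argument for $P$ is correct and is essentially the same linear-algebra argument the paper invokes (via the citation to \cite{MuKo}, and compare the proof of Lemma~\ref{agr-2}).

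The second half, however, contains a genuine gap and is also unnecessarily complicated. You write that ``$\mathbb{C}(g/f,h/f)=\mathbb{C}(X_0(N))$ is a constant field extension of $\mathbb{Q}(g/f,h/f)=\mathbb{Q}(X_0(N))$''. Neither equality is justified, and in general both are false: they would say precisely that the map (\ref{map}) is birational, which is not assumed here. Your linear-disjointness argument can be repaired by instead noting that $\mathbb{Q}(g/f,h/f)\subseteq \mathbb{Q}(X_0(N))$, so that $\mathbb{Q}$ is algebraically closed in $\mathbb{Q}(g/f,h/f)$ as well; but this detour is not needed.

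The paper's route is a one-liner you overlooked: you already know from the discussion preceding the lemma that $Q(g/f,\cdot)$ is irreducible over $\mathbb{C}(g/f)$. Since $\mathbb{Q}(g/f)\subseteq \mathbb{C}(g/f)$, irreducibility over the larger field trivially implies irreducibility over the smaller one. Hence $Q(g/f,\cdot)$ is the minimal polynomial of $h/f$ over $\mathbb{Q}(g/f)$, and both degree equalities follow immediately. No appeal to regularity, geometric irreducibility, or tensor products is required.
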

\begin{proof} The proof of the first claim of the lemma is contained in the proof of \cite[Corollary 1. 5 (ii)]{MuKo}. The polynomial $P$ must be irreducible over $\mathbb Q$ since it is irreducible
  over $\mathbb C$. The second claim follows from this. 
  \end{proof}

\begin{Def}\label{agr-9} Maintaining the assumptions of Lemma \ref{agr-8}, we denote by $Q_h=Q_{f, g, h}$ an irreducible   polynomial with relatively prime coefficients in $\mathbb Z$ 
 such that   $Q_{f, g, h}\left(g/f, h/f\right)=0$. Let $L_h=L_{f, g, h}$ be the splitting field $Q_h(g/f, \cdot)$ containing $\mathbb Q(g/f, h/f)$, and
 $G_h=G_{f, g, h}\overset{def}{=} Gal(L_h/\mathbb Q(g/f))$. For $\lambda\in \mathbb Z$, we let $L_{h, \lambda}=L_{f, g, h, \lambda}$ be the splitting field $Q_h(\lambda, \cdot)$, and
 $G_{h, \lambda}=G_{f, g, h, \lambda}\overset{def}{=} Gal(L_{h, \lambda}/\mathbb Q)$.
  \end{Def}
The polynomial $Q_{f, g, h}$ is determined up to a multiplication by $\pm 1$. The corresponding $P$  is denoted by $P_h=P_{f, g, h}$.   Next, we can combine the above considerations with
the celebrated Hilbert's irreducibility theorem.  Following Serre \cite{Serre}, we call  a subset $A\subset \mathbb Z$ thin if  the number of elements in the intersection of $A$ with
  a segment $[-n, n]$ is $O(n^{1/2})$ as $n\longrightarrow \infty$. 

\begin{Lem}\label{agr-4}
  There exists a thin subset $A_{f, g, h}\subset \mathbb Z$ such that $G_{f,g,h}$ is isomorphic to $G_{f, g, h, \lambda}$, 
  for $\lambda\in \mathbb Z-A_{f, g, h}$.
  \end{Lem}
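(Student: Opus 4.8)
The plan is to realize the situation as a direct instance of the classical Hilbert irreducibility setup over the rational function field $\mathbb Q(T)$ and then invoke Serre's formulation. Write $t = g/f \in \mathbb Q(X_0(N))$; by Lemma \ref{agr-8} the element $t$ is transcendental over $\mathbb Q$ (indeed $\mathbb Q(t)$ is a rational function field in one variable, since $Q\left(g/f, \cdot\right)$ genuinely involves the second variable, forcing $g/f \notin \mathbb Q$), so we may identify $\mathbb Q(t)$ with $\mathbb Q(T)$ for an indeterminate $T$. Under this identification $Q_{f,g,h}(T, Y) \in \mathbb Z[T, Y] \subset \mathbb Q(T)[Y]$ is, by Lemma \ref{agr-8}, irreducible over $\mathbb Q(T)$, and $L_{f,g,h}$ is by definition its splitting field over $\mathbb Q(T)$, with $G_{f,g,h} = \mathrm{Gal}(L_{f,g,h}/\mathbb Q(T))$.

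Next I would apply Hilbert's irreducibility theorem in the sharp form recorded in Serre \cite[Proposition 3.3.5 and page 26, part 3]{Serre}. That result says: given a finite separable extension $E/\mathbb Q(T)$ with Galois closure having group $G$, the set of $\lambda \in \mathbb Z$ for which the specialization $T \mapsto \lambda$ is \emph{not} defined, or is defined but fails to produce a splitting field with Galois group isomorphic to $G$ (equivalently, for which $Q_{f,g,h}(\lambda, Y)$ either is not separable of the right degree or whose splitting field has smaller Galois group), is a thin subset of $\mathbb Z$. Concretely, one excludes the finitely many $\lambda$ lying under the (finitely many) branch points of the cover and where leading coefficients vanish — a finite, hence thin, set — and then the Hilbert property gives that outside a further thin set the specialized Galois group $G_{f,g,h,\lambda} = \mathrm{Gal}(L_{f,g,h,\lambda}/\mathbb Q)$ is isomorphic to the generic group $G_{f,g,h}$. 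Taking $A_{f,g,h}$ to be the union of these thin sets — itself thin, since a finite union of thin sets is thin — yields the claim.

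The only point requiring a little care is matching the definitions: Definition \ref{agr-9} defines $L_{h,\lambda}$ as the splitting field of $Q_h(\lambda, \cdot)$ over $\mathbb Q$ and $G_{h,\lambda}$ as its Galois group, and one must check this coincides with the specialization-theoretic Galois group appearing in Serre's statement. This is immediate once $\lambda$ avoids the zeros of the leading coefficient of $Q_h$ in $T$ and avoids the values where $Q_h(\lambda, Y)$ acquires a repeated root: then $Q_h(\lambda, \cdot)$ has the same degree as $Q_h(T, \cdot)$ and is separable, so its splitting field over $\mathbb Q$ is the reduction of $L_{f,g,h}$ and Serre's bijection on roots identifies the decomposition group with a subgroup of $G_{f,g,h}$, which the Hilbert property forces to be all of $G_{f,g,h}$ for $\lambda$ outside a thin set.

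I do not expect a genuine obstacle here: the statement is essentially a repackaging of Hilbert irreducibility, and the substantive work — producing the irreducible integral polynomial $Q_{f,g,h}$ and knowing $\mathbb Q(g/f)$ is rational in one variable — has already been done in Lemma \ref{agr-8}. If anything, the mildly delicate step is being careful that ``thin in $\mathbb Z$'' is preserved under the finite union with the (automatically thin) exceptional sets coming from bad specializations, and that the count $O(n^{1/2})$ in the paper's definition of thin is exactly Serre's; both are standard.
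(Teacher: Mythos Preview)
Your proposal is correct and follows essentially the same approach as the paper: both observe that $\mathbb Q(g/f)$ is a rational function field in one variable over $\mathbb Q$ and then invoke Hilbert's irreducibility as formulated in \cite[Proposition 3.3.5 and page 26, part 3]{Serre}. The paper's proof is simply a terser version of yours, citing Serre directly without spelling out the specialization details you carefully unpack.
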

\begin{proof} Since $g/f$ is transcendental over $\mathbb Q$ in sense of field theory, the field $\mathbb Q\left(\frac{g}{f}\right)$ is just a field of rational functions in one variable over $\mathbb Q$.
Now, the claim follows from the Hilbert's irreducibility (see for example \cite[Proposition 3.3.5 and page 26, part 3]{Serre}).
\end{proof}

We write $Q_{f, g, h}(\lambda, \cdot)$ in the form
$$
Q_{f, g, h}(\lambda, T)=\sum_{i=1}^n a_i(\lambda) T^i \in \mathbb Z[\lambda, T].
$$
We remark that $\lambda$ and $T$ are variables here.  By Definition \ref{agr-9}, the polynomial is irreducible over $\mathbb Z$ (not just over $\mathbb Q$). For the present purpose this form of
the polynomial is not good. We use standard trick 
$$
\wit{Q}_{f, g, h}(\lambda, T)\overset{def}{=} a_n(\lambda)^{n-1} Q(\lambda, T/a_n(\lambda))= T^n+ a_{n-1}(\lambda) T^{n-1}+ \sum_{i=1}^{n-2}  a_n(\lambda)^{n-1-i} a_i(\lambda)  T^i \in \mathbb Z[\lambda, T].
$$
This polynomial is also irreducible over $\mathbb Z$ (and over $\mathbb Q$) as it is easy to check. Recall that in the present notation we have 
$\sum_{i=1}^n a_i(g/f) (h/f)^i= Q_{f, g, h}(g/f, h/f)=0$ in $\mathbb Q(g/f, h/f)$. We fix an algebraic closure $\overline{\mathbb Q(g/f)}$ containing  $\mathbb Q(g/f, h/f)$. 
By Definition \ref{agr-9},  we may regard $G_{f, g, h}$ as the Galois group of the
splitting field $L_{f, g, h}$ inside $\overline{\mathbb Q(g/f)}$. It is obvious that $L_{f, g, h}$  is the splitting field of 
$Q_{f, g, h}(g/f, T)$ in $\overline{\mathbb Q(g/f)}$. We decompose into linear factors in  $L_{f, g, h}$
$$
\wit{Q}_{f, g, h}(g/f, T)=(T-\alpha_1)(T-\alpha_2)\cdots (T-\alpha_n), \ \ \text{say,} \  \alpha_1= a_n(g/f)h/f.
$$
The group $G_{f, g, h}$ permutes the fixed sequence $\alpha_1, \cdots, \alpha_n$. In this way, we may regard $G_{f, g, h}$ as a subgroup of the symmetric group $S(n)$ of $n$--letters.

Let $\Lambda_{f, g, h}$ be the integral closure of $\mathbb Q[\lambda]$ in $L_{f, g, h}$. 
Let  $\lambda \in \mathbb Z- A_{f, g, h}$. We select a point $\mathfrak a\in X_0(N)$ such that $\mathfrak a$ gets mapped by $g/f$ onto $\lambda$. 
Let $\mathfrak m_{\lambda}$  be the maximal ideal in $\mathbb Q[g/f]$ consisting of all that  map $\mathfrak a$ onto zero.
We fix a maximal ideal  $\mathfrak M_{f, g, h, \lambda}$ in $\Lambda_{f, g, h}$ lying over $\mathfrak m_{\lambda}$. Then, $\Lambda_{f, g, h}/\mathfrak M_{f, g, h, \lambda}$ is a splitting field
of $\wit{Q}_{f, g, h}(\lambda, T)$, and we have
$$
\wit{Q}_{f, g, h}(\lambda, T)=(T-\overline{\alpha}_1)(T-\overline{\alpha}_2)\cdots (T-\overline{\alpha}_n),
$$
where  $\overline{\alpha}_i=\alpha_i+ \mathfrak M_{f, g, h, \lambda}$. We may let $L_{f, g, h, \lambda} =\Lambda_{f, g, h}/\mathfrak M_{f, g, h, \lambda}$ since
both   $\wit{Q}_{f, g, h}(\lambda, T)$ and $\wit{Q}_{f, g, h}(\lambda, T)$ have the same splitting field. Thus, 
$G_{f, g, h, \lambda}$ permutes the fixed sequence $\overline{\alpha}_1, \cdots, \overline{\alpha}_n$ (see Lemma \ref{agr-9}). In this way, we may regard $G_{f, g, h, \lambda}$ as a transitive  subgroup of the
symmetric group $S(n)$ of $n$--letters.  The isomorphism $G_{f, g, h}$ with $G_{f, g, h, \lambda}$ (see Lemma \ref{agr-4}) can be taken to be
compatible with the action on above fixed sequences of roots.  After all preparations, classical Frobenius theory  (generalized later by Chebotarev) easily implies the following theorem which we use
for explicit computations.

\begin{Thm}\label{hit-main} Maintaining above assumptions, $G_{f, g, h}$ contains a permutation with a cycle pattern  $n_1, n_2, \ldots, n_t$ if and only if there exists a prime number $p$  and 
  and $r\in \{0, 1, \ldots, p-1\}$ such that 
  $\wit{Q}_{f, g, h}(r, T)=  T^n+ a_{n-1}(r) T^{n-1}+ \sum_{i=1}^{n-2}  a_n(r)^{n-1-i} a_i(r) T^i  \ (\text{mod} \ p)$
  can be decomposed into  a product of different irreducible factors of degrees $n_1, n_2, \ldots, n_t$.
\end{Thm}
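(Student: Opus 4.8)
The plan is to fix one integer specialization $\lambda_0$ of the variable $\lambda$ lying outside the thin set $A_{f,g,h}$, to reduce the statement to the classical Frobenius--Dedekind--Chebotarev description of factorization patterns modulo $p$ of a \emph{single} polynomial over $\mathbb{Q}$, and then to pass between $\lambda_0$ and the residue $r$ of the statement, exploiting only that the reduction of $\wit{Q}_{f,g,h}$ modulo $p$ depends on $\lambda$ through its residue class alone. I would begin with elementary bookkeeping. Since $\wit{Q}_{f,g,h}(\lambda,T)$ is monic in $T$ and irreducible over $\mathbb{Q}(\lambda)$, a field of characteristic zero, it is separable, so its $T$-discriminant $\delta(\lambda)\in\mathbb{Q}[\lambda]$ is a nonzero polynomial; for every $r\in\mathbb{Q}$ one has $\mathrm{disc}_T\bigl(\wit{Q}_{f,g,h}(r,T)\bigr)=\delta(r)$ (again because $\wit{Q}_{f,g,h}$ is monic in $T$), the polynomial $\wit{Q}_{f,g,h}(r,T)$ is separable exactly when $\delta(r)\neq 0$, and for a prime $p$ its reduction modulo $p$ is a product of \emph{distinct} irreducible factors exactly when $p\nmid\delta(r)$. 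Finally, if $r\equiv r'\pmod p$ with $r,r'\in\mathbb{Z}$, then $\wit{Q}_{f,g,h}(r,T)\equiv\wit{Q}_{f,g,h}(r',T)\pmod p$, as $\wit{Q}_{f,g,h}\in\mathbb{Z}[\lambda,T]$.

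The heart of the proof is the following reformulation, which I would establish for an arbitrary $\lambda_0\in\mathbb{Z}-A_{f,g,h}$ (for such $\lambda_0$ the polynomial $\wit{Q}_{f,g,h}(\lambda_0,T)$ is irreducible over $\mathbb{Q}$, hence separable, so $\delta(\lambda_0)\neq 0$): $G_{f,g,h}$ contains a permutation with cycle pattern $n_1,\dots,n_t$ if and only if there is a prime $p\nmid\delta(\lambda_0)$ such that $\wit{Q}_{f,g,h}(\lambda_0,T)\bmod p$ is a product of distinct irreducible factors of degrees $n_1,\dots,n_t$. Here I would invoke the structure built up immediately before the theorem: $L_{f,g,h,\lambda_0}=\Lambda_{f,g,h}/\mathfrak{M}_{f,g,h,\lambda_0}$ is the splitting field over $\mathbb{Q}$ of the separable monic polynomial $\wit{Q}_{f,g,h}(\lambda_0,T)$, and $G_{f,g,h,\lambda_0}=\mathrm{Gal}(L_{f,g,h,\lambda_0}/\mathbb{Q})$ is isomorphic to $G_{f,g,h}$ compatibly with the actions on the fixed root sequences $\bar\alpha_i\leftrightarrow\alpha_i$; in particular the two groups contain permutations of exactly the same cycle patterns. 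For a prime $p\nmid\delta(\lambda_0)$, $p$ is unramified in $L_{f,g,h,\lambda_0}$, and Dedekind's theorem on the factorization of primes identifies the cycle pattern of the Frobenius conjugacy class at $p$ in $G_{f,g,h,\lambda_0}$ with the multiset of degrees of the (automatically distinct) irreducible factors of $\wit{Q}_{f,g,h}(\lambda_0,T)\bmod p$; conversely, such a $p$ together with such a factorization produces a Frobenius element with the prescribed cycle pattern. Since, by the Chebotarev density theorem applied to $L_{f,g,h,\lambda_0}/\mathbb{Q}$, every conjugacy class of $G_{f,g,h,\lambda_0}$ is $\mathrm{Frob}_p$ for infinitely many primes $p$, hence for some $p\nmid\delta(\lambda_0)$, the reformulation follows; this is precisely the ``generalization by Chebotarev'' of the classical Frobenius density theorem mentioned before the statement.

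Granting the reformulation, both implications are immediate. For the forward one, choose any $\lambda_0\in\mathbb{Z}-A_{f,g,h}$, obtain from the reformulation a prime $p\nmid\delta(\lambda_0)$ with the desired factorization of $\wit{Q}_{f,g,h}(\lambda_0,T)\bmod p$, and take $r\in\{0,1,\dots,p-1\}$ with $r\equiv\lambda_0\pmod p$; then $\wit{Q}_{f,g,h}(r,T)\equiv\wit{Q}_{f,g,h}(\lambda_0,T)\pmod p$ has the same factorization pattern. For the reverse one, suppose a prime $p$ and $r\in\{0,\dots,p-1\}$ are given with $\wit{Q}_{f,g,h}(r,T)\bmod p$ a product of distinct irreducible factors of degrees $n_1,\dots,n_t$; I would choose $r'\in\mathbb{Z}-A_{f,g,h}$ with $r'\equiv r\pmod p$, which is possible because a thin set has density zero and so cannot contain the whole arithmetic progression $r+p\mathbb{Z}$. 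Then $\wit{Q}_{f,g,h}(r',T)\bmod p=\wit{Q}_{f,g,h}(r,T)\bmod p$ is a product of distinct irreducible factors of degrees $n_1,\dots,n_t$, so $p\nmid\delta(r')$, and the reformulation applied with $\lambda_0=r'$ and this prime $p$ shows that $G_{f,g,h}$ contains a permutation with cycle pattern $n_1,\dots,n_t$.

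The only genuinely delicate step is the reverse implication: the integer $r$ furnished by the hypothesis need not avoid $A_{f,g,h}$, so the isomorphism $G_{f,g,h}\cong G_{f,g,h,r}$ is not directly available; one must first slide $r$ to a congruent $r'\notin A_{f,g,h}$, which is legitimate exactly because the mod-$p$ reduction only sees the residue class. The rest is the discriminant bookkeeping of the first paragraph --- lining up ``a product of distinct irreducible factors modulo $p$'', ``$p$ unramified'', and ``$p\nmid\delta$'' so that Dedekind's theorem applies in its clean unramified form --- together with keeping the labelling of the roots coherent under the isomorphism $G_{f,g,h}\cong G_{f,g,h,\lambda_0}$, so that a statement about cycle patterns transports faithfully; both are routine.
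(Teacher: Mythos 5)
Your proof is correct and follows essentially the same route as the paper: specialize at some $\lambda_0\in\mathbb{Z}-A_{f,g,h}$ where Hilbert irreducibility identifies $G_{f,g,h}$ with $G_{f,g,h,\lambda_0}$ compatibly with the root labellings, translate between factorization patterns of $\wit{Q}_{f,g,h}(\lambda_0,T)$ modulo $p$ and cycle patterns via Frobenius--Dedekind--Chebotarev, and pass between $r$ and a congruent integer outside the thin set using the fact that an arithmetic progression $r+p\mathbb{Z}$ is never thin. The only difference is one of explicitness: you spell out the discriminant bookkeeping and the unramifiedness needed to apply Dedekind's theorem in the direction ``factorization pattern $\Rightarrow$ permutation,'' which the paper subsumes under a single appeal to the Frobenius density theorem; this is a welcome refinement but not a different argument.
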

\begin{proof} First, for each $r\in \{0, 1, \ldots, p-1\}$, the number of integers $\le n$ of the the form $kp+r$ is  $O(n)$ is $n\longrightarrow \infty$. So, this set can not be thin.  
Next, let    $\lambda \in \mathbb Z-A_{f, g, h}$.  By Frobenius theorem, the density of all primes $p$ such that  $\wit{Q}_{f, g, h}(\lambda, T) \ (\text{mod} \ p)$
  can be decomposed into  a product of different irreducible factors of degrees $n_1, n_2, \ldots, n_t$ is equal to the number number of elements in
  $G_{f, g, h, \lambda}$  with a cycle pattern  $n_1, n_2, \ldots, n_t$ divided by the number of elements in $G_{f, g, h, \lambda}$.

After these preliminaries,  we prove the theorem. First, assume  that for $r\in \{0, 1, \ldots, p-1\}$  
$\wit{Q}_{f, g, h}(r, T)$   can be decomposed into  a product of different irreducible factors of degrees $n_1, n_2, \ldots, n_t$. Then, we select  $\lambda \in \mathbb Z-A_{f, g, h}$ such that
$\lambda \equiv r \ (\text{mod} \ p)$. By Frobenius theorem and Lemma \ref{agr-4}, we obtain that $G_{f, g, h}$ contains a permutation with a cycle pattern  $n_1, n_2, \ldots, n_t$. 

Conversely, assume that  $G_{f, g, h}$ contains a permutation with a cycle pattern  $n_1, n_2, \ldots, n_t$. Then, the same is true for  $G_{f, g, h, \lambda}$  for any $\lambda \in \mathbb Z-A_{f, g, h}$.
We fix any such $\lambda$. By Frobenius theorem, there exists a prime number $p$ such that  $\wit{Q}_{f, g, h}(\lambda, T) \ (\text{mod} \ p)$
  can be decomposed into  a product of different irreducible factors of degrees $n_1, n_2, \ldots, n_t$. Finally, let  $r$ be the residue of division of $\lambda$ by $p$. We have 
$\wit{Q}_{f, g, h}(\lambda, T) \equiv  \wit{Q}_{f, g, h}(r, T)   \ (\text{mod} \ p)$
proving the claim. \end{proof} 

\section{Proof of Theorems \ref{agr-1} and  \ref{agr-1-a}}\label{agr}
The goal of the present section is to prove Theorems \ref{agr-1-a} and  \ref{agr-1}. We start by preliminary definitions and results.
Let 
$$
E_4(z)=1+240 \sum_{n=1}^\infty \sigma_3(n)q^n$$
 be the usual Eisenstein series, 
and let 
$$
\Delta(z)=q+\sum_{n=2}^\infty\tau(n)q^n
$$
 be the Ramanujan delta function. Then,   $j=E^3_4/\Delta$. 
Since  $E^3_4(N\cdot ), \Delta (N\cdot ) \in M_{12}(\Gamma_0(N))$,
the fact that the function field over $\mathbb C$ is generated by  $j$ and 
$j(N\cdot )$ means that the holomorphic map

\begin{equation}\label{mmap}
\mathfrak a_z\mapsto (1: j(z): j(Nz))=
(\Delta(z)\Delta(Nz): E^3_4(z)\Delta(Nz): E^3_4(Nz) \Delta(z))
\end{equation}
is a birational uniformization by $S_{24}(\Gamma_0(N))$. The $\mathbb Q$--structure on the curve $X_0(N)$ is introduced by considering its
field of rational function defined over $\mathbb Q$:

\begin{equation} \label{agr-100}
\mathbb Q\left(X_0(N)\right)\overset{def}{=} \mathbb Q(j, j(N \cdot)).
\end{equation}

Now, we are ready to prove the following lemma. The lemma must be well--known but we could not find appropriate reference.

\begin{Lem} \label{agr-2} Assume that  $m\ge 2$ is an even integer. Suppose that  $f, g, h\in M_m(\Gamma_0(N))$ have integral (or rational) $q$--expansions. Then, the map (\ref{map}) is birational
  over $\mathbb C$ if and only if it is birational over $\mathbb Q$. 
\end{Lem}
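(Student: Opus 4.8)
The plan is to show that both notions of birationality for the map (\ref{map}) are equivalent to a single intrinsic statement about the polynomial $Q$: namely that $Q(g/f,\cdot)$ has degree $1$ in its last variable, equivalently that the induced map to its image curve $\calC(f,g,h)$ has degree $1$. Over an algebraically closed field the map $\mathfrak a_z\mapsto (f(z):g(z):h(z))$ is birational onto $\calC(f,g,h)$ precisely when $[\mathbb C(X_0(N)):\mathbb C(g/f,h/f)]=1$, and since $Q(g/f,\cdot)$ is the minimal polynomial of $h/f$ over $\mathbb C(g/f)$, this degree equals $\deg\bigl(Q(g/f,\cdot)\bigr)$ by the discussion preceding Lemma \ref{agr-8}. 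So ``birational over $\mathbb C$'' is equivalent to $\deg\bigl(Q(g/f,\cdot)\bigr)=1$.

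Next I would run the same argument over $\mathbb Q$. Here the point is that the $\mathbb Q$-structure on $X_0(N)$ is given by (\ref{agr-100}), so that $\mathbb Q(X_0(N))=\mathbb Q(j,j(N\cdot))$, and the map (\ref{map}) is defined over $\mathbb Q$ because $f,g,h$ have rational $q$-expansions (this is exactly what makes Definition \ref{agr-9} sensible). By Lemma \ref{agr-8}, the polynomial $Q$ may be taken with rational coefficients, and $Q(g/f,\cdot)$ is then the minimal polynomial of $h/f$ over $\mathbb Q(g/f)$, so
$$
\bigl[\mathbb Q(g/f,h/f):\mathbb Q(g/f)\bigr]=\deg\bigl(Q(g/f,\cdot)\bigr).
$$
The map (\ref{map}) is birational over $\mathbb Q$ exactly when the function field of the image curve over $\mathbb Q$ coincides with $\mathbb Q(X_0(N))$, i.e. when $\mathbb Q(g/f,h/f)=\mathbb Q(X_0(N))$; since $[\mathbb Q(X_0(N)):\mathbb Q(g/f,h/f)]$ is finite and this image field contains $\mathbb Q(g/f,h/f)$, the map is birational over $\mathbb Q$ iff $[\mathbb Q(X_0(N)):\mathbb Q(g/f,h/f)]=1$. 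The crucial input of Lemma \ref{agr-8} is that this index does not change under base extension to $\mathbb C$: $[\mathbb Q(g/f,h/f):\mathbb Q(g/f)]=[\mathbb C(g/f,h/f):\mathbb C(g/f)]$, both equal to $\deg\bigl(Q(g/f,\cdot)\bigr)$.

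Putting the two halves together: birational over $\mathbb C$ $\iff\deg\bigl(Q(g/f,\cdot)\bigr)=1\iff$ birational over $\mathbb Q$. The one subtlety I would be careful about is making precise what ``birational over $\mathbb Q$'' means for the map (\ref{map}) — namely that one first needs to know the image curve $\calC(f,g,h)$ is defined over $\mathbb Q$ (so that $Q$, hence $P$, can be chosen with rational coefficients), which is exactly the content already extracted from \cite[Corollary 1.5]{MuKo} and recorded in Lemma \ref{agr-8}. I expect the main obstacle to be purely expository: assembling the compatibility of function-field degrees under the extension $\mathbb Q\subset\mathbb C$ cleanly, i.e. that $\mathbb C(X_0(N))=\mathbb C\otimes_{\mathbb Q}\mathbb Q(X_0(N))$ in the appropriate sense so that $[\mathbb C(X_0(N)):\mathbb C(g/f,h/f)]=[\mathbb Q(X_0(N)):\mathbb Q(g/f,h/f)]$; once that is granted, everything reduces to Lemma \ref{agr-8} and the degree computation for $Q$.
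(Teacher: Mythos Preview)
Your central claim is wrong: you assert that $[\mathbb C(X_0(N)):\mathbb C(g/f,h/f)]=\deg\bigl(Q(g/f,\cdot)\bigr)$, but $\deg\bigl(Q(g/f,\cdot)\bigr)$ equals the \emph{other} degree $[\mathbb C(g/f,h/f):\mathbb C(g/f)]$, as the discussion before Lemma~\ref{agr-8} says explicitly. These are very different numbers. For instance, if $g(\Gamma_0(N))\ge 2$ and $f,g,h$ are chosen so that (\ref{map}) \emph{is} birational, then the first degree is $1$ while $\deg\bigl(Q(g/f,\cdot)\bigr)=[\mathbb C(X_0(N)):\mathbb C(g/f)]\ge 2$. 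So ``birational over $\mathbb C$ $\iff \deg\bigl(Q(g/f,\cdot)\bigr)=1$'' is false, and the chain of equivalences in your summary collapses. Lemma~\ref{agr-8} controls the bottom of the tower $\mathbb Q(g/f)\subset\mathbb Q(g/f,h/f)\subset\mathbb Q(X_0(N))$, whereas birationality is about the top; it cannot carry the argument by itself.

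What you actually need---and what you relegate to a ``purely expository'' afterthought---is exactly the content of the lemma: that $[\mathbb C(X_0(N)):\mathbb C(g/f,h/f)]=[\mathbb Q(X_0(N)):\mathbb Q(g/f,h/f)]$, together with the a priori inclusion $g/f,\,h/f\in\mathbb Q(X_0(N))$ (which you have also assumed without proof). The paper proves both by a direct linear-algebra argument: starting from $\mathbb C(g/f,h/f)=\mathbb C(j,j(N\cdot))$, one writes $j=P(f,g,h)/Q(f,g,h)$ with $P,Q$ homogeneous over $\mathbb C$, expands in $q$, and observes that the resulting homogeneous linear system in the coefficients of $P,Q$ has rational coefficients (since $E_4,\Delta,f,g,h$ all have rational $q$-expansions); a nonzero complex solution therefore forces a nonzero rational solution, giving $j\in\mathbb Q(g/f,h/f)$. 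The same trick, run with $E_4^3\Delta(N\cdot)$, $E_4^3(N\cdot)\Delta$, $\Delta\Delta(N\cdot)$ playing the role of $f,g,h$, yields the reverse inclusion $g/f,h/f\in\mathbb Q(j,j(N\cdot))$. Your base-change idea (linear disjointness of $\mathbb C$ and $\mathbb Q(X_0(N))$ over $\mathbb Q$) can be made to work, but it requires knowing that $\mathbb Q$ is algebraically closed in $\mathbb Q(X_0(N))$ and that $g/f,h/f$ already lie there---neither of which you have supplied.
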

\begin{proof}  Assume that the map (\ref{map}) is birational
  over $\mathbb C$. Then, we have
  $$
   \mathbb C(g/f, \ h/f)= \mathbb C(j, j(N \cdot)).
 $$
This implies that there exist homogeneous polynomials $P$ and $Q$ of the same degree with complex coefficients such that
   $$
   j=\frac{P(f, g, h)}{Q(f, g, h)}.
   $$
Equivalently, we have the following:
   $$
   E^3_4 \cdot Q(f, g, h)- \Delta\cdot P(f, g, h) =0.
   $$
 Inserting rational $q$--expansions of $ E^3_4$, $\Delta$, $f$, $g$, and $h$,  we obtain a $q$--expansion of the modular form $E^3_4 \cdot Q(f, g, h)- \Delta\cdot P(f, g, h)$.
 But,  by the assumption, this modular form is identically equal to zero. Therefore, all its coefficients in the $q$--expansion are equal to zero. (Enough to choose first few depending on its weight.)
 Each of these coefficients is a linear homogeneous function in coefficients of $P$ and $Q$. Thus, we obtain  the  
 homogeneous linear system in coefficients of $Q$ and $P$ with coefficients in $\mathbb Q$. This homogeneous system has a non--zero solution over $\mathbb C$.
 Therefore, it has a non--zero solution over $\mathbb Q$. Thus, we can take that the coefficients of $P$ and $Q$ are in $\mathbb Q$. This proves
   $j\in \mathbb Q(g/f, \ h/f)$. Similarly, we also prove 
   $j(N\cdot )\in \mathbb Q(g/f, \ h/f)$.
   This implies
\begin{equation} \label{agr-3}
\mathbb Q\left(X_0(N)\right)= \mathbb Q(j, j(N \cdot))\subset \mathbb Q(g/f, \ h/f).
\end{equation}
Conversely, we can write
$$
j=\frac{E^3_4}{\Delta}= \frac{E^3_4 \cdot \Delta(N \cdot)}{\Delta \cdot \Delta(N \cdot)}, \ \text{and} \
j(N \cdot)=\frac{E^3_4(N \cdot)}{\Delta(N \cdot)}= \frac{E^3_4(N \cdot) \cdot \Delta}{\Delta \cdot \Delta(N \cdot)}.
$$
Now, we can argue as before to prove
$$
g/f, \ h/f \in \mathbb Q\left(\frac{E^3_4 \cdot \Delta(N \cdot)}{\Delta \cdot \Delta(N \cdot)}, \ \frac{E^3_4(N \cdot)\cdot \Delta}{\Delta\cdot \Delta(N \cdot)}\right)= 
\mathbb Q(j, j(N \cdot)).
$$
Hence, we obtain
$$
\mathbb Q(g/f, \ h/f)\subset \mathbb Q\left(X_0(N)\right).
$$
This means that  the map (\ref{map}) is birational
over $\mathbb Q$.

 Conversely,   we have
  $$
  \mathbb Q(g/f, \ h/f)= \mathbb Q\left(X_0(N)\right).
  $$
  Taking the composite fields (with $\mathbb C$) we obtain
   $$
  \mathbb C(g/f, \ h/f)= \left(\mathbb C\right) \left(\mathbb Q(g/f, \ h/f)\right)= \left(\mathbb C\right) \left(\mathbb Q\left(X_0(N)\right)\right)=
  \mathbb C\left(X_0(N)\right).
  $$
  In the last equality we use (\ref{agr-100}).    This means that the map (\ref{map}) is birational over $\mathbb C$.
 \end{proof}

By Eichler--Shimura theory \cite[Theorem 3.5.2]{shi}, for 
each even integer $m\ge 2$ the space of cusp forms $S_m(\Gamma_0(N))$ is defined over $\mathbb Q$. Let $S_m(\Gamma_0(N))_{\mathbb Q}$ be the $\mathbb Q$--span of all cusp forms in $S_m(\Gamma_0(N))$ 
having rational $q$--expansions. Obviously, $S_m(\Gamma_0(N))=S_m(\Gamma_0(N))_{\mathbb Q}\otimes_{\mathbb Q} \ \mathbb C$.

\begin{Lem}\label{agr-6} Assume that either $m=2$ and $X_0(N)$ is not hyperelliptic (implies $g(\Gamma_0(N))\ge 3$) or $m\ge 4$ is  an even integer such that $\dim S_m(\Gamma_0(N)) \ge \max{(g(\Gamma_0(N))+2, 3)}$.
  Then, we have the following:
\begin{itemize}
    \item[(i)] Let $f_0, \ldots, f_{s-1}$ be a basis of $S_m(\Gamma_0(N))_{\mathbb Q}$. Then, $\mathbb Q(X_0(N))$ is generated over $\mathbb Q$ by the
      quotients $f_i/f_0$, $1\le i\le s-1$.
      \item[(ii)] Assume that $f, g \in S_m(\Gamma_0(N))_{\mathbb Q}$ are linearly independent over $\mathbb Q$. 
	Then, there exists  a non-empty Zariski open set $\calU_{f, g}\subset S_m(\Gamma_0(N))_{\mathbb Q}$ \footnote{By definition, the intersection of a Zariski dense subset of
        $S_m(\Gamma_0(N))$ defined over $\mathbb Q$ with  $S_m(\Gamma_0(N))_{\mathbb Q}$.} such that $X_0(N)$ is birationally equivalent over $\mathbb Q$ to  $\calC (f, g, h)$ via the map (\ref{map}) i.e.,
        $\mathbb Q\left(g/f, h/f\right)=\mathbb Q\left(X_0(N)\right)$ for any 	$h\in \calU_{f, g}$.
      \item[(iii)] Assume that $f, g \in S_m(\Gamma_0(N))_{\mathbb Q}$ are linearly independent over $\mathbb Q$. Then, the degree of the field extension $[\mathbb Q(X_0(N)): \mathbb Q(g/f)]$
        is equal to the degree of the  divisor of zeores of $g/f$. 
       \end{itemize}
  \end{Lem}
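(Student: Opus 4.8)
The plan is to treat part (i) as the core and to derive (ii) and (iii) from it together with Lemma \ref{agr-2}, Lemma \ref{agr-8}, and classical curve theory. For (i), I would first quote the birationality criterion of \cite{Muic2} (see also \cite{MuKo}): under the stated hypotheses the holomorphic map $\Phi\colon X_0(N)\to\mathbb{P}^{s-1}$, $\mathfrak{a}_z\mapsto(f_0(z):\cdots:f_{s-1}(z))$, is birational onto its image over $\mathbb{C}$, that is $\mathbb{C}(X_0(N))=\mathbb{C}(f_1/f_0,\dots,f_{s-1}/f_0)$; for $m=2$ this is just the classical fact that the canonical map of a non-hyperelliptic curve is an embedding, while for $m\ge4$ this is precisely where the hypothesis $\dim S_m(\Gamma_0(N))\ge g(\Gamma_0(N))+2$ is used. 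It then remains to descend to $\mathbb{Q}$, and for this I would rerun the proof of Lemma \ref{agr-2} with the three forms $f,g,h$ there replaced by $f_0,\dots,f_{s-1}$: writing $j=E_4^3\,\Delta(N\cdot)/(\Delta\,\Delta(N\cdot))$ and $j(N\cdot)=E_4^3(N\cdot)\,\Delta/(\Delta\,\Delta(N\cdot))$ as quotients of homogeneous polynomials of equal degree in the $f_i$ with complex coefficients and clearing denominators gives an identity of holomorphic modular forms whose coefficientwise vanishing is a homogeneous linear system with coefficients in $\mathbb{Q}$ (every $q$-expansion in sight being rational); a nonzero complex solution exists, hence so does a nonzero rational one, and since the pairs whose denominator vanishes identically on $(f_0,\dots,f_{s-1})$ form a proper $\mathbb{Q}$-rational subspace that does not contain the complex solution space, there is a rational solution with non-vanishing denominator, exhibiting $j$ and $j(N\cdot)$ in $\mathbb{Q}(f_1/f_0,\dots,f_{s-1}/f_0)$. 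The reverse inclusion is the same descent applied to each $f_i/f_0\in\mathbb{C}(j,j(N\cdot))$.

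For (ii), I would extend $\{f,g\}$ to a basis $f=f_0,\,g=f_1,\,f_2,\dots,f_{s-1}$ of $S_m(\Gamma_0(N))_{\mathbb{Q}}$ and set $K=\mathbb{Q}(g/f)$, $\theta_i=f_i/f$, so that by (i) the extension $L:=\mathbb{Q}(X_0(N))=K(\theta_2,\dots,\theta_{s-1})$ is finite and separable over $K$. For $h=\sum_{i=0}^{s-1}c_if_i$ with $c_i\in\mathbb{Q}$ one has $h/f=c_0+c_1(g/f)+\sum_{i\ge2}c_i\theta_i$, and since $c_0+c_1(g/f)\in K$ this gives $K(h/f)=K\big(\sum_{i\ge2}c_i\theta_i\big)$. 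By the primitive element theorem $L/K$ has only finitely many intermediate fields $K\subseteq M\subsetneq L$, and for each of them $\{(c_2,\dots,c_{s-1})\in\mathbb{Q}^{s-2}:\sum_{i\ge2}c_i\theta_i\in M\}$ is a $\mathbb{Q}$-linear subspace, proper because otherwise all $\theta_i$ would lie in $M$ and hence $L=K(\theta_2,\dots,\theta_{s-1})\subseteq M$. The complement of the finite union of these subspaces is a non-empty Zariski open subset of $S_m(\Gamma_0(N))$ defined over $\mathbb{Q}$, and I would let $\calU_{f,g}$ be its intersection with $S_m(\Gamma_0(N))_{\mathbb{Q}}$; for $h\in\calU_{f,g}$ then $K(h/f)=L$, i.e.\ $\mathbb{Q}(g/f,h/f)=\mathbb{Q}(X_0(N))$.

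For (iii), choose $h\in\calU_{f,g}$ as in (ii); then the triple $(f,g,h)$ is birational over $\mathbb{C}$ as well (Lemma \ref{agr-2}), so Lemma \ref{agr-8} gives
\[
[\mathbb{Q}(X_0(N)):\mathbb{Q}(g/f)]=[\mathbb{Q}(g/f,h/f):\mathbb{Q}(g/f)]=[\mathbb{C}(g/f,h/f):\mathbb{C}(g/f)]=[\mathbb{C}(X_0(N)):\mathbb{C}(g/f)],
\]
while $g/f$, being non-constant on $X_0(N)$ (otherwise $f$ and $g$ are linearly dependent), defines a finite morphism $X_0(N)\to\mathbb{P}^1$ whose degree equals both $[\mathbb{C}(X_0(N)):\mathbb{C}(g/f)]$ and the degree of the divisor of zeros of $g/f$; combining these gives the claim. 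The only step with real content internal to this paper is the one in (ii): turning the heuristic ``a sufficiently general linear combination is a primitive element'' into a genuinely non-empty Zariski open subscheme of $S_m(\Gamma_0(N))$ defined over $\mathbb{Q}$ (the packaging via finiteness of intermediate fields is what keeps this clean); everything else is either imported — the complex birationality of $\Phi$ from \cite{Muic2} and \cite{MuKo} — or a routine $q$-expansion descent in the spirit of Lemma \ref{agr-2}, together with the standard computation of the degree of a rational function viewed as a map to $\mathbb{P}^1$.
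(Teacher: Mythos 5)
Your proposal is correct. Parts (i) and (iii) follow essentially the paper's route: for (i) the paper likewise quotes the complex generation statement (citing \cite[Corollary 3.4]{Muic1} for $m\ge 4$ and the canonical embedding for $m=2$) and then says the $\mathbb{Q}$--descent is ``analogous to Lemma \ref{agr-2}''; your rerun of that descent is what is intended, and you are in fact a bit more careful than the paper in noting that one must pick a rational solution of the linear system whose denominator does not vanish identically. Part (iii) is word-for-word the paper's argument (pick $h$ from (ii), apply Lemmas \ref{agr-2} and \ref{agr-8}, then the standard degree-of-a-morphism-to-$\mathbb{P}^1$ fact). The genuine divergence is in (ii): the paper, following \cite[Theorem 1.4]{MuKo}, exhibits $\calU_{f,g}$ as the non-vanishing locus of the discriminant $R(\lambda_2,\dots,\lambda_{s-1})$ of the characteristic polynomial of multiplication by $(\lambda_2 f_2+\cdots+\lambda_{s-1}f_{s-1})/f$ on $\mathbb{Q}(X_0(N))$ over $\mathbb{Q}(g/f)$, specialized at a suitable point $\mathfrak{a}\in X_0(N)$, whereas you invoke the finiteness of intermediate fields of the finite separable extension $L/K$ and observe that the bad coefficient vectors form a finite union of proper $\mathbb{Q}$--linear subspaces. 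Both are valid; your route is shorter and more conceptual (the only inputs being characteristic zero and that a finite union of proper subspaces over an infinite field is not everything), while the paper's route produces an explicit polynomial $R$ whose non-vanishing can in principle be tested, which is better adapted to the computational aims of the rest of the paper. One cosmetic remark: your open set excludes $h\in\mathbb{Q}f+\mathbb{Q}g$ automatically whenever $L\neq K$ (since then $0$ lies in some proper intermediate field), which is exactly what is needed for the phrase ``birationally equivalent to $\calC(f,g,h)$ via the map (\ref{map})'' to make sense.
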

\begin{proof}  The proof of (i) is analogous to that of  Lemma \ref{agr-2}. The detail are left to the reader. We just note that under assumption of the lemma we have
  $\mathbb C\left(f_1/f_0, \ldots, f_{s-1}/f_0\right)=   \mathbb C\left(X_0(N)\right)$.   This is obvious when $m=2$, and we use  \cite[Corollary 3.4]{Muic1} for the case $m\ge 4$.
  For the proof of the first claim in (ii), one can  adjust the proofs of \cite[Theorem 1.4]{MuKo}. We sketch the argument.  First, we select a basis of $S_m(\Gamma_0(N))_{\mathbb Q}$ such that
  $f=f_0$ and $g=f_1$.  Then, 
  by (i), we have that
  $$
  \mathbb Q\left(X_0(N)\right)=   \mathbb Q\left(g/f, f_2/f, \ldots, f_{s-1}/f\right)=  \mathbb Q\left(g/f\right) \left(f_2/f, \ldots, f_{s-1}/f_0\right) \supset  \mathbb Q\left(g/f\right)
  $$
  is a finite algebraic extension. The set of all $(\lambda_2, \ldots, \lambda_{s-1})\in \mathbb Q^{s-2}$  such that  $(\lambda_2 f_2+\cdots + \lambda_{s-1} f_{s-1})/f$ generates
  $\mathbb Q\left(X_0(N)\right)$ over $ \mathbb Q\left(g/f\right)$ is not empty (by an argument of \cite[Lemma 3.2]{MuKo}) and it is determined by  $R(\lambda_2, \ldots, \lambda_{s-1})\neq 0$.
  Here, $R$ is the discriminant of the characteristic polynomial of the endomorphism of the vector space  $\mathbb Q\left(X_0(N)\right)$ over $ \mathbb Q\left(g/f\right)$ determined by the multiplication by
  $(\lambda_2 f_2+\cdots + \lambda_{s-1} f_{s-1})/f$ (exactly as in the proof of \cite[Lemma 3.4]{MuKo}). The discriminant $R$ is a polynomial in variables $\lambda_2, \ldots, \lambda_{s-1}$ with coefficients in
  $\mathbb Q\left(g/f\right)$. By above considerations, $R$ is a non--zero polynomial. We select $\lambda\in \mathbb Q$ and $\mathfrak a\in X_0(N)$ such that $g/f$ maps $\mathfrak a$ onto $\lambda$, and
  all non--zero coefficients of $R$ when evaluated at $\mathfrak a$,  are non--zero numbers in $\mathbb Q$. Then, $R(\lambda_2, \ldots, \lambda_{s-1})(\mathfrak a)$ is a non--zero polynomial with
  rational coefficients. Then, for the Zariski open set $\calU_{f, g}\subset S_m(\Gamma_0(N))_{\mathbb Q}$ can be taken the set of all $h=\lambda_0 f+ \lambda_1 g+\lambda_2 f_2+\cdots + \lambda_{s-1} f_{s-1}$,
  such that $R(\lambda_2, \ldots, \lambda_{s-1})(\mathfrak a)\neq 0$.

  Now, we prove (iii). First of all, by (ii), there exists $S_m(\Gamma_0(N))_{\mathbb Q}$  such that $\mathbb Q\left(g/f, h/f\right)=\mathbb Q\left(X_0(N)\right)$. By Lemma \ref{agr-2}, we have
  $\mathbb C\left(g/f, h/f\right)=\mathbb C\left(X_0(N)\right)$. Now, Lemma \ref{agr-8}, we have that $[\mathbb Q(X_0(N)): \mathbb Q(g/f)]$ is equal to $[\mathbb C(X_0(N)): \mathbb C(g/f)]$.
  Finally, it is a well--known that $[\mathbb C(X_0(N)): \mathbb C(g/f)]$ is  equal to the degree of the  divisor of zeroes of $g/f$. 
  \end{proof}

We generalize the last claim in (iii) in the following
corollary:

\begin{Cor} \label{agr-16} Let $m\ge 2$ be an even integer such that $S_m(\Gamma_0(N))_{\mathbb Q}\ge 2$.
  Assume that $f, g \in S_m(\Gamma_0(N))_{\mathbb Q}$ are linearly independent over $\mathbb Q$. Then, the degree of the field extension $[\mathbb Q(X_0(N)): \mathbb Q(g/f)]$
  is equal to the degree of the  divisor of zeores of $g/f$. Moreover, if we let \begin{equation} \label{agr-5}
           l_{m, N}\overset{def}{=}\dim S_m(\Gamma_0(N))+ g(\Gamma_0(N))-1 -\epsilon_{m}, \ \ \text{$\epsilon_2=1$  and $\epsilon_m=0$ for $m\ge 4$,}
         \end{equation}
then  $[\mathbb Q(X_0(N)): \mathbb Q(g/f)]\le l_{m, N}$ (a bound independent of $f$ and $g$).
\end{Cor}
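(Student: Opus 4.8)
The plan is to reduce the general statement to the hyperelliptic-or-small-dimension case that is \emph{not} covered by Lemma \ref{agr-6}, and otherwise simply to invoke that lemma. First I would observe that the statement about $[\mathbb Q(X_0(N)):\mathbb Q(g/f)]$ being the degree of the divisor of zeros of $g/f$ can be proved exactly as in Lemma \ref{agr-6}(iii): the element $g/f$ lies in $\mathbb Q(X_0(N))$, which is a finite extension of $\mathbb Q(g/f)$ (since $g/f$ is transcendental over $\mathbb Q$ and $X_0(N)$ has transcendence degree one); by Lemma \ref{agr-2} combined with Lemma \ref{agr-8}, the degree of this extension over $\mathbb Q$ equals the degree over $\mathbb C$ of $\mathbb C(X_0(N))/\mathbb C(g/f)$; and the latter is classically the degree of the divisor of zeros of $g/f$ (equivalently, the number of preimages of a generic point of $\mathbb P^1$, counted with multiplicity, under the morphism $g/f\colon X_0(N)\to\mathbb P^1$). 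The one point to check is that Lemmas \ref{agr-2} and \ref{agr-8} as stated apply here: they are stated for $f,g,h$ three linearly independent forms, but the relevant input is only that $g/f$ generates a subfield over which $\mathbb Q(X_0(N))$ is algebraic, so the same descent-of-linear-systems argument goes through with $h$ replaced by any element of $\mathbb Q(X_0(N))$; alternatively, when $\dim S_m(\Gamma_0(N))_{\mathbb Q}\ge 3$ one picks any third independent $h$ and applies the lemmas verbatim, and when $\dim S_m(\Gamma_0(N))_{\mathbb Q}=2$ one argues directly that $j$ and $j(N\cdot)$ are rational functions of $g/f$ with $\mathbb Q$-coefficients by the same linear-algebra-over-$\mathbb Q$-versus-$\mathbb C$ trick.

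Next I would establish the uniform bound $[\mathbb Q(X_0(N)):\mathbb Q(g/f)]\le l_{m,N}$. The degree of the divisor of zeros of $g/f$ equals the degree of the divisor of poles of $g/f$, which is at most the degree of the pole divisor bounded by the divisor of zeros of $f$, i.e.\ $\deg(\mathrm{div}_0(g/f))\le\deg(\mathrm{div}(f)_0)$ where $\mathrm{div}(f)$ is interpreted as the divisor of the weight-$m$ form $f$ on $X_0(N)$. By the standard formula for the degree of the divisor of a modular form of weight $m$ for $\Gamma_0(N)$, $\deg(\mathrm{div}(f))=\frac{m}{12}[\mathrm{SL}_2(\mathbb Z):\Gamma_0(N)]$ (up to the usual elliptic-point and cusp corrections), which by Riemann–Roch / the dimension formula equals $\dim S_m(\Gamma_0(N))+g(\Gamma_0(N))-1$ for $m\ge 4$ and the same minus one for $m=2$ (the $\epsilon_m$ correction, coming from $S_2$ corresponding to $\Omega^1$ of degree $2g-2$ rather than a line bundle of degree $2g-2+\#\text{cusps}$ in the naive count). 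Since $f\in S_m(\Gamma_0(N))_{\mathbb Q}$ is a cusp form, it vanishes at every cusp, so the bound on the pole order of $g/f$ along the cuspidal part is already absorbed; the clean way to phrase it is that $g/f$, being a ratio of two cusp forms of the same weight, has $\mathrm{div}(g/f)=\mathrm{div}(g)-\mathrm{div}(f)$ and hence $\deg(\mathrm{div}_0(g/f))\le\deg(\mathrm{div}(f))=l_{m,N}$, using $\deg(\mathrm{div}(f))=\dim S_m(\Gamma_0(N))+g(\Gamma_0(N))-1-\epsilon_m$ from the Riemann–Roch computation. This shows the bound is independent of the choice of $f$ and $g$.

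The main obstacle, and the step I would spend the most care on, is making the degree formula $\deg(\mathrm{div}(f))=l_{m,N}$ precise, including the elliptic-point and cusp contributions, and verifying that the cusp-form hypothesis makes $\deg(\mathrm{div}_0(g/f))\le l_{m,N}$ rather than merely $\le\deg(\mathrm{div}(f))$ with an extra cuspidal term. Concretely: $f$ as a section of the line bundle $\omega^{\otimes m}$ (with $\omega^{\otimes 2}\cong\Omega^1(\text{cusps})$) has $\deg(\mathrm{div}(f))=m\cdot\deg\omega$; one expresses $\deg\omega$ via $2\deg\omega=\deg\Omega^1+\#\text{cusps}=2g(\Gamma_0(N))-2+\#\text{cusps}$, and then uses $\dim M_m-\dim S_m=\#\text{cusps}-$(correction) together with the dimension formula for $\dim S_m(\Gamma_0(N))$ to collapse everything to $\dim S_m(\Gamma_0(N))+g(\Gamma_0(N))-1-\epsilon_m$. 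The bookkeeping with elliptic points of orders $2$ and $3$ (where $\mathrm{div}(f)$ picks up fractional-looking contributions that must be handled via the orbifold/stacky degree or, equivalently, via the valence formula) is the delicate part; I would either cite the valence formula directly or work on a suitable cover where $\Gamma_0(N)$ acts freely. Everything else is formal, so I expect the proof to be short once this computation is pinned down.
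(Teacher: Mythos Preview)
Your approach to the bound $[\mathbb Q(X_0(N)):\mathbb Q(g/f)]\le l_{m,N}$ is essentially the paper's: write $\mathrm{div}(g/f)=\mathfrak c_g-\mathfrak c_f$ as a difference of effective divisors each of degree $l_{m,N}$. The paper simply cites \cite[Lemma~2.1]{Muic2} for this identity and for $\deg\mathfrak c_f=l_{m,N}$, which packages exactly the valence--formula / Riemann--Roch computation you propose to carry out by hand. So that half of your plan is correct, just longer than necessary given the available reference.

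Where you diverge from the paper is in the first claim. You propose to invoke Lemma~\ref{agr-6}(iii) when its hypotheses hold and to treat the excluded cases ($m=2$ with $X_0(N)$ hyperelliptic, or $\dim S_m$ too small) by separate ad hoc arguments. The paper bypasses this case split with a one--line trick: choose $k$ large enough that $\dim S_{km}(\Gamma_0(N))\ge\max(g(\Gamma_0(N))+2,3)$, note that $f^k$ and $f^{k-1}g$ are $\mathbb Q$--linearly independent in $S_{km}(\Gamma_0(N))_{\mathbb Q}$, and apply Lemma~\ref{agr-6}(iii) to that pair in weight $km\ge 4$. Since $f^{k-1}g/f^k=g/f$, the conclusion for the original $f,g$ follows immediately. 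This is shorter and uniform.

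Your handling of the case $\dim S_m(\Gamma_0(N))_{\mathbb Q}=2$ also contains a genuine slip as written: you say one should ``argue directly that $j$ and $j(N\cdot)$ are rational functions of $g/f$ with $\mathbb Q$--coefficients,'' but that assertion is equivalent to $\mathbb Q(X_0(N))=\mathbb Q(g/f)$, i.e.\ to $X_0(N)$ having genus zero, which is certainly not forced by $\dim S_m=2$. What you actually need there is the equality $[\mathbb Q(X_0(N)):\mathbb Q(g/f)]=[\mathbb C(X_0(N)):\mathbb C(g/f)]$, and the descent argument of Lemma~\ref{agr-2} does not deliver this without a third form $h$ giving $\mathbb Q(g/f,h/f)=\mathbb Q(X_0(N))$. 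The paper's power trick manufactures exactly such an $h$ in a higher weight and so avoids the difficulty altogether.
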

\begin{proof} Select sufficiently large $k$ such that $\dim S_{km}(\Gamma_0)\ge \max{(g(\Gamma_0(N))+2, 3)}$ (which is easy to see using explicit formula for the dimension (see for
  example  \cite[Lemma 2.2]{MuKo})). For such $k$, $f^k$ and $f^{k-1}g$ are $\mathbb Q$--linearly independent cuspidal forms in $\dim S_{km}(\Gamma_0)$. Now, the first claim of the corollary follows from
  Lemma \ref{agr-6} (iii) since  $f^{k-1}g/f^k= g/f$.

  For the bound, we estimate the degree of the divisor of zeroes of $g/f$ applying \cite[Lemma 2.1]{Muic2}. We obtain
  \begin{equation} \label{agr-10}
  \mathrm{div}{(g/f)}=\mathrm{div}{(g)}-\mathrm{div}{(f)}=\mathfrak c_g-\mathfrak c_f, 
  \end{equation} 
  where for non--zero $h_1\in S_m(\Gamma_0(N))$ the divisor $c_{h_1}$ is an  effective integral divisors  of the degree  $\deg{(\mathfrak c_{h_1})}=l_{m, N}$.
 Hence, (\ref{agr-10}) implies  that the degree of the divisor of zeroes of $g/f$ is $\le l_{m, N}$.  
 \end{proof}

\vskip .2in
Let us fix an even integer  $m\ge 2$ such that $\dim{S_m(\Gamma_0(N))_{\mathbb Q}}\ge 3$. Assume that $f, g\in S_m(\Gamma_0(N))_{\mathbb Q}$ are $\mathbb Q$--linearly independent.
Then, for each $h \in S_m(\Gamma_0(N))_{\mathbb Q}$, $h\not\in \mathbb Q f+ \mathbb Q g$, we may construct irreducible polynomials $P_h$ and $Q_h$ with integral coefficients, define splitting fields $L_h$,
and define Galois groups $G_h$ (see Definition \ref{agr-9}).

\begin{Def} \label{agr-12}
Let $\mathcal G=\mathcal G_{f, g}$ be the set consisting of all representatives of groups $G_h$, $h\in S_m(\Gamma_0(N))_{\mathbb Q} - \left(\mathbb Q f+ \mathbb Q g\right)$
up to isomorphism. For $G\in \mathcal G$,  let  $\Xi_G$ be the set of all $h\in S_m(\Gamma_0(N))_{\mathbb Q} - \left(\mathbb Q f+ \mathbb Q g\right)$ such that $G_h\simeq G$.
We denote by $\Xi'_G$ the set of all $h\in \Xi_G$ such that  the degree of $Q_{f, g, h}(g/f, \cdot)$ is $[\mathbb Q(X_0(N)): \mathbb Q(g/f)]$.
\end{Def}

We explain Definition \ref{agr-12} in the following lemma: 

\begin{Lem}\label{agr-13} Let $G\in \mathcal G$. Then, the order of $G$ is divisible by $\left[\mathbb Q(g/f, h/f): \mathbb Q(g/f)\right]$ for any
  $h\in \Xi_G$. Moreover,  $G$ can be regarded as a subgroup of the symmetric group of
  $\left[\mathbb Q(X_0(N)): \mathbb Q(g/f)\right]$--letters (non--uniquely). In particular, $\mathcal G$ is finite. Moreover,  assume that $\Xi'_G\neq \emptyset$. Then,
  the order of $G$ is divisible by $[\mathbb Q(X_0(N)): \mathbb Q(g/f)]$, $\mathbb Q(g/f, h/f)=\mathbb Q(X_0(N))$,
  the splitting field $L_{f, g, h}$ is a finite extension of
    $\mathbb Q(X_0(N))$, and  $G$ can be regarded as a transitive subgroup of the symmetric group of
$\left[\mathbb Q(X_0(N)): \mathbb Q(g/f)\right]$--letters (uniquely up to conjugation).
\end{Lem}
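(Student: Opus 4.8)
The plan is to deduce every assertion from elementary Galois theory together with the finiteness of $d:=[\mathbb{Q}(X_0(N)):\mathbb{Q}(g/f)]$ supplied by Corollary \ref{agr-16} (by that corollary $d$ is a finite integer depending only on $f$ and $g$, namely the degree of the divisor of zeros of $g/f$). Since $\mathbb{Q}(g/f)$ has characteristic zero, the splitting field $L_h$ of $Q_h(g/f,\cdot)$ is a Galois extension of $\mathbb{Q}(g/f)$, so $|G_h|=[L_h:\mathbb{Q}(g/f)]$. For the first assertion I fix $h\in\Xi_G$: by Definition \ref{agr-9} one has the tower $\mathbb{Q}(g/f)\subseteq\mathbb{Q}(g/f,h/f)\subseteq L_h$, and the tower law shows that $[\mathbb{Q}(g/f,h/f):\mathbb{Q}(g/f)]$ divides $[L_h:\mathbb{Q}(g/f)]=|G_h|=|G|$.

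For the second and third assertions I would first record that, because $f,g,h$ have rational $q$--expansions, both $g/f$ and $h/f$ lie in $\mathbb{Q}(X_0(N))$; this is the $q$--expansion argument used in the proof of Lemma \ref{agr-2} (applied to $f,g,h$, or to $f^k,\,f^{k-1}g,\,f^{k-1}h$ for large $k$ as in the proof of Corollary \ref{agr-16} if one wishes to remain within the standing hypotheses). Hence $\mathbb{Q}(g/f)\subseteq\mathbb{Q}(g/f,h/f)\subseteq\mathbb{Q}(X_0(N))$, and by Lemma \ref{agr-8} and the tower law
\[
n_h:=\deg Q_h(g/f,\cdot)=[\mathbb{Q}(g/f,h/f):\mathbb{Q}(g/f)]\le d .
\]
Now $G_h$ acts faithfully and transitively on the $n_h$ roots of the irreducible polynomial $Q_h(g/f,\cdot)$ (equivalently of $\wit{Q}_h(g/f,\cdot)$), so $G_h\hookrightarrow S(n_h)$; composing with an embedding $S(n_h)\hookrightarrow S(d)$ that fixes the remaining $d-n_h$ letters exhibits $G\cong G_h$ as a subgroup of $S(d)$. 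This depends on the chosen $h$, on the ordering of the roots and on the padding, so the embedding is non-unique; and since $S(d)$ has only finitely many subgroups, only finitely many isomorphism types occur among the $G_h$, i.e. $\mathcal G$ is finite.

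For the last assertion, suppose $\Xi'_G\neq\emptyset$ and pick $h\in\Xi'_G$. By definition $\deg Q_h(g/f,\cdot)=d$, so $n_h=d$, and the inclusion $\mathbb{Q}(g/f,h/f)\subseteq\mathbb{Q}(X_0(N))$ of fields of the same finite degree $d$ over $\mathbb{Q}(g/f)$ must be an equality: $\mathbb{Q}(g/f,h/f)=\mathbb{Q}(X_0(N))$. The first assertion then gives $d\mid|G|$, and from $L_h\supseteq\mathbb{Q}(g/f,h/f)=\mathbb{Q}(X_0(N))$ with $[L_h:\mathbb{Q}(g/f)]$ finite it follows that $L_h$ is a finite extension of $\mathbb{Q}(X_0(N))$. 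Since now $n_h=d$, the permutation action above already realises $G\cong G_h$ as a transitive subgroup of $S(d)$, and changing the ordering of the $d$ roots conjugates its image by an element of $S(d)$, so this transitive representation is well defined up to conjugation. (In fact, for such $h$ the field $L_h$ is the Galois closure of $\mathbb{Q}(X_0(N))$ over $\mathbb{Q}(g/f)$, hence independent of $h$, and the representation is the action of $G$ on the $d$ cosets of $\mathrm{Gal}(L_h/\mathbb{Q}(X_0(N)))$, which makes the conjugacy statement transparent.)

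I do not expect a genuine obstacle here: once Corollary \ref{agr-16} is available the argument is pure field theory and counting. The one point that needs a little care is the containment $\mathbb{Q}(g/f,h/f)\subseteq\mathbb{Q}(X_0(N))$ — precisely the $q$--expansion principle of Lemma \ref{agr-2} applied to $f,g,h$ — together with keeping straight which conclusions require $\Xi'_G\neq\emptyset$ and which already hold for every $h\in\Xi_G$.
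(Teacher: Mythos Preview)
Your proof is correct and follows essentially the same route as the paper's: both arguments use the tower $\mathbb Q(g/f)\subset\mathbb Q(g/f,h/f)\subset L_h$ for divisibility, the inequality $n_h=[\mathbb Q(g/f,h/f):\mathbb Q(g/f)]\le[\mathbb Q(X_0(N)):\mathbb Q(g/f)]=d$ together with the permutation action on roots for the embedding into $S(d)$, and Lemma~\ref{agr-8} plus Definition~\ref{agr-12} for the $\Xi'_G$ part. If anything you are more explicit than the paper in justifying the containment $\mathbb Q(g/f,h/f)\subseteq\mathbb Q(X_0(N))$ and in passing from the order bound $|G_h|\le d!$ to an actual embedding $G_h\hookrightarrow S(n_h)\hookrightarrow S(d)$.
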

\begin{proof} The first claim is obvious since we have the chain of field extensions (see Definition \ref{agr-9})
  $$
  \mathbb Q(g/f)\subset \mathbb Q(g/f, h/f) \subset L_h,
  $$
  and  $G\simeq G_h=Gal(L_h/\mathbb Q(g/f))$. 
Basic elementary estimate for  $\left[L_h: \mathbb Q(g/f)\right]$ implies 
$$
\# G_h= \left[L_h : \mathbb Q(g/f)\right]\le \left(\left[\mathbb Q(g/f, h/f): \mathbb Q(g/f)\right] \right)!
\le \left(\left[\mathbb Q(X_0(N): \mathbb Q(g/f)\right]\right)!.
$$
Hence, $G$ can be regarded as a subgroup of the symmetric group of
  $\left[\mathbb Q(X_0(N)): \mathbb Q(g/f)\right]$--letters. Thus, $\mathcal G$ is finite.

Assume that $\Xi'_G\neq \emptyset$. Then, by Lemma \ref{agr-8} and Definition \ref{agr-12}, for $h\in \Xi'_g$, we obtain
  $$\left[\mathbb Q\left(g/f, \  h/f\right) : \mathbb Q\left(g/f\right)\right]= \deg{\left(Q_{f, g, h}\left(g/f, \cdot \right)\right)}=[\mathbb Q(X_0(N)): \mathbb Q(g/f)].
  $$
  This is equivalent with $\mathbb Q(g/f, h/f)=\mathbb Q(X_0(N))$. Now, it is obvious that $L_{f, g, h}$ is a finite extension of
    $\mathbb Q(X_0(N))$. Other two claims in the last part of the lemma follow from Galois theory.
\end{proof}

In the next lemma we give a bound independent of choice of $f$ and $g$. The bound might not be optimal but it is needed in the theorem below. 

\begin{Lem}\label{agr-14} The order of every group $G\in \mathcal G$ is less than or equal to $l_{m, N}!=1\cdot 2\cdots l_{m, N}$ where $l_{m, N}$ is defined by (\ref{agr-5}). 
 \end{Lem}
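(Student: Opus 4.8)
The plan is to assemble the desired bound directly from two results already in hand, the elementary symmetric-group estimate of Lemma~\ref{agr-13} and the uniform degree bound of Corollary~\ref{agr-16}; no genuinely new input is required.

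First I would fix $G\in\mathcal G$ and choose any $h\in\Xi_G$, so that $G\simeq G_h=Gal(L_h/\mathbb Q(g/f))$ with the chain of fields
$$
\mathbb Q(g/f)\subset \mathbb Q(g/f,h/f)\subset L_h.
$$
Since $g/f$ and $h/f$ are quotients of modular forms of equal weight with rational $q$--expansions, they are rational functions on $X_0(N)$ defined over $\mathbb Q$, hence $\mathbb Q(g/f,h/f)\subseteq\mathbb Q(X_0(N))$. As recorded in the proof of Lemma~\ref{agr-13}, $L_h$ is a splitting field over $\mathbb Q(g/f)$ of a polynomial of degree $[\mathbb Q(g/f,h/f):\mathbb Q(g/f)]$, so that
$$
\# G=[L_h:\mathbb Q(g/f)]\le \bigl([\mathbb Q(g/f,h/f):\mathbb Q(g/f)]\bigr)!\le \bigl([\mathbb Q(X_0(N)):\mathbb Q(g/f)]\bigr)!,
$$
the last step using the inclusion above together with the monotonicity of $n\mapsto n!$.

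Next I would invoke Corollary~\ref{agr-16}: in the ambient setup of Definition~\ref{agr-12} one fixes $m$ with $\dim S_m(\Gamma_0(N))_{\mathbb Q}\ge 3\ge 2$ and $f,g\in S_m(\Gamma_0(N))_{\mathbb Q}$ that are $\mathbb Q$--linearly independent, so the hypotheses of Corollary~\ref{agr-16} are met and it gives $[\mathbb Q(X_0(N)):\mathbb Q(g/f)]\le l_{m, N}$ with $l_{m, N}$ as in (\ref{agr-5}) --- a bound independent of $f$, $g$, and $h$. Substituting this into the previous display and using once more that the factorial is increasing yields $\# G\le l_{m, N}!$, which is the assertion.

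There is essentially no obstacle here: the lemma is a bookkeeping consequence of the earlier material, and the only mild points to check are that the hypotheses of Corollary~\ref{agr-16} hold in the present context and that the containment $\mathbb Q(g/f,h/f)\subseteq\mathbb Q(X_0(N))$ --- which is precisely what licenses the symmetric-group estimate of Lemma~\ref{agr-13} with $[\mathbb Q(X_0(N)):\mathbb Q(g/f)]$ letters --- is valid. Both are immediate, so the proof is short.
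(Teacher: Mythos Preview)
Your proof is correct and follows essentially the same approach as the paper: the paper's one-line argument simply says ``argue as in Lemma~\ref{agr-13} using the estimate $[\mathbb Q(X_0(N)):\mathbb Q(g/f)]\le l_{m,N}$ (see Corollary~\ref{agr-16})'', and you have spelled out exactly that chain of inequalities in detail.
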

\begin{proof}  We argue as in Lemma \ref{agr-13} using the estimate  $\left[\mathbb Q(X_0(N): \mathbb Q(g/f)\right]\le l_{m, N}$ (see Corollary \ref{agr-16}).
\end{proof}

Finally, we need the following lemma:  

\begin{Lem}\label{agr-15}  Assume that $f, g\in S_m(\Gamma_0(N))_{\mathbb Q}$ are $\mathbb Q$--linearly independent. Then, there exists a subgroup  $G\in  \mathcal G$ such that $\Xi_G$ is Zariski dense
  in $S_m(\Gamma_0(N))_{\mathbb Q}$.
\end{Lem}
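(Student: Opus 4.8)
The plan is to show that the decomposition $S_m(\Gamma_0(N))_{\mathbb Q} - (\mathbb Q f + \mathbb Q g) = \bigsqcup_{G \in \mathcal G} \Xi_G$ is a \emph{finite} partition of a Zariski-irreducible variety into constructible pieces, and then invoke the fact that a finite union of Zariski-non-dense (equivalently, proper Zariski-closed-contained) subsets cannot cover an irreducible variety. First I would note that $\mathcal G$ is finite by Lemma \ref{agr-13} (equivalently by Lemma \ref{agr-14}), so the index set of the partition is finite. The ambient space $S_m(\Gamma_0(N))_{\mathbb Q}$ is an affine space over $\mathbb Q$, hence irreducible, and removing the linear subspace $\mathbb Q f + \mathbb Q g$ leaves a non-empty Zariski-open subset $U_0$, which is still irreducible. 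It then suffices to show that at least one $\Xi_G$ is not contained in any proper closed subset of $U_0$; since the $\Xi_G$ cover $U_0$ and there are finitely many of them, if every $\Xi_G$ were contained in a proper closed subset $Z_G \subsetneq U_0$ we would get $U_0 = \bigcup_G (\Xi_G \cap U_0) \subseteq \bigcup_G Z_G$, contradicting irreducibility of $U_0$ (a finite union of proper closed subsets is proper closed). Hence some $\Xi_G$ is Zariski dense in $U_0$, and since $U_0$ is dense in $S_m(\Gamma_0(N))_{\mathbb Q}$, that $\Xi_G$ is Zariski dense in $S_m(\Gamma_0(N))_{\mathbb Q}$.

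The one point that needs care is the logical step ``not contained in a proper closed subset $\Rightarrow$ Zariski dense,'' and the passage between closures taken inside $U_0$ versus inside $S_m(\Gamma_0(N))_{\mathbb Q}$. Here I would argue as follows: let $\overline{\Xi_G}$ denote the Zariski closure of $\Xi_G$ in the affine space $S_m(\Gamma_0(N))_{\mathbb Q}$. Then $\bigcup_{G \in \mathcal G} \overline{\Xi_G} \supseteq \bigcup_{G} \Xi_G = U_0$, and taking closures, $\bigcup_{G} \overline{\Xi_G} \supseteq \overline{U_0} = S_m(\Gamma_0(N))_{\mathbb Q}$, the last equality because $U_0$ is a non-empty open subset of the irreducible space. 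Since this is a finite union of closed sets whose union is the whole irreducible space, at least one $\overline{\Xi_G}$ must equal $S_m(\Gamma_0(N))_{\mathbb Q}$; that $G$ works.

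I do not expect a genuine obstacle here — the argument is the standard ``irreducible space is not a finite union of proper closed subsets'' pigeonhole. The only thing worth stating explicitly for the reader is the two inputs being used: (1) finiteness of $\mathcal G$, already established in Lemma \ref{agr-13}, and (2) irreducibility of the affine space $S_m(\Gamma_0(N))_{\mathbb Q}$ over $\mathbb Q$ together with the elementary fact that removing a proper linear subspace keeps it irreducible and that its closure is the whole space. No properties of the polynomials $Q_{f,g,h}$ or of the Galois groups beyond ``there are finitely many isomorphism classes'' are needed for this particular lemma; the content is purely topological.
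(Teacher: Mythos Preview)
Your proposal is correct and follows essentially the same argument as the paper: finitely many $\Xi_G$ cover the open set $S_m(\Gamma_0(N))_{\mathbb Q}\setminus(\mathbb Q f+\mathbb Q g)$, taking closures covers the whole irreducible affine space, and hence one closure must be everything. The paper's version is terser, but the idea is identical; your extra care about closures in $U_0$ versus in the ambient space is harmless and arguably clarifying.
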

\begin{proof} The Zariski open set $ S_m(\Gamma_0(N))_{\mathbb Q} - \left(\mathbb Q f+ \mathbb Q g\right)$ is a union of finite number of sets $\Xi_G$ 
  parameterized by subgroups $G\in  \mathcal G$.  Taking the Zariski closure we find that
  $$
  S_m(\Gamma_0(N))_{\mathbb Q} = \cup_G \ \overline{\Xi}_G.
  $$
  But the space $S_m(\Gamma_0(N))_{\mathbb Q}$ is irreducible in sense of algebraic geometry. So, there exists a subgroup  $G\in \mathcal G$ such that 
  $$
  \overline{\Xi}_G=  S_m(\Gamma_0(N))_{\mathbb Q}.
  $$
  This means that $\Xi_G$ is Zariski dense in $S_m(\Gamma_0(N))_{\mathbb Q}$. 
\end{proof}

Now, after all of these preparations, we are ready to state and prove the main results of the present section.

\begin{Thm}\label{agr-1-a}   Let $m\ge 2$ be an even integer such that $\dim{S_m(\Gamma_0(N))_{\mathbb Q}}\ge 3$. 
  Then, there exists a thin subset $A_{m, N}\subset \mathbb Z$, and  triples of linearly independent forms $f_i, g_i, h_i \in S_m(\Gamma_0(N))$, $1\le i\le k$,  such that  for any 
    $f, g, h\in S_m(\Gamma_0(N))$ which are linearly independent, there exists $i$ such that
    $G_{f, g, h}\simeq G_{f_i, g_i, h_i, \lambda}$, $\lambda \in \mathbb Z-  A_{m, N}$.    
      \end{Thm}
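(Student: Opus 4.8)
The plan is to obtain Theorem \ref{agr-1-a} as a purely formal consequence of the uniform order bound of Lemma \ref{agr-14} together with Hilbert's irreducibility in the shape of Lemma \ref{agr-4}: there is no genuinely hard analytic or arithmetic step here, the whole content being that, as $(f,g,h)$ ranges over \emph{all} linearly independent triples in $S_m(\Gamma_0(N))$ with rational $q$--expansions (so that $G_{f,g,h}$ is defined, cf.\ Definition \ref{agr-9}), the groups $G_{f,g,h}$ realise only finitely many isomorphism types, after which one quotes Hilbert's theorem once for a representative of each type.

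Concretely, I would first let $\mathcal H$ be the set of isomorphism classes of groups occurring as $G_{f,g,h}$ for some linearly independent triple $f,g,h\in S_m(\Gamma_0(N))_{\mathbb Q}$. By Lemma \ref{agr-14} (which in turn rests on Corollary \ref{agr-16}) every such $G_{f,g,h}$ has order at most $l_{m,N}!$, a bound depending only on $m$ and $N$; since there are only finitely many finite groups of bounded order up to isomorphism, $\mathcal H$ is finite, say $\mathcal H=\{[G_1],\ldots,[G_k]\}$. For each $i$ I would fix, once and for all, a linearly independent triple $f_i,g_i,h_i\in S_m(\Gamma_0(N))_{\mathbb Q}$ with $G_{f_i,g_i,h_i}\simeq G_i$; such a triple exists by the very definition of $\mathcal H$. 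Applying Lemma \ref{agr-4} to $(f_i,g_i,h_i)$ produces a thin set $A_{f_i,g_i,h_i}\subset\mathbb Z$ with $G_{f_i,g_i,h_i}\simeq G_{f_i,g_i,h_i,\lambda}$ for all $\lambda\in\mathbb Z-A_{f_i,g_i,h_i}$. I would then set $A_{m,N}:=\bigcup_{i=1}^{k}A_{f_i,g_i,h_i}$, which is thin because a finite union of thin sets is thin (the defining estimate $\#(A\cap[-n,n])=O(n^{1/2})$ is additive over finite unions).

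It remains to check that this list does the job. Given any linearly independent $f,g,h\in S_m(\Gamma_0(N))_{\mathbb Q}$, the group $G_{f,g,h}$ is isomorphic to $G_i$ for some $i\in\{1,\ldots,k\}$ by construction of $\mathcal H$, and then for every $\lambda\in\mathbb Z-A_{m,N}\subseteq\mathbb Z-A_{f_i,g_i,h_i}$ one has $G_{f,g,h}\simeq G_i\simeq G_{f_i,g_i,h_i}\simeq G_{f_i,g_i,h_i,\lambda}$, which is exactly the assertion.

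The one step where anything is really happening — and it is already carried out in Lemma \ref{agr-14} — is the observation that the bound $l_{m,N}!$ is independent of the pair $(f,g)$: for a fixed pair, finiteness of the pool of groups is just finiteness of $\mathcal G_{f,g}$ (Definition \ref{agr-12}), but the theorem needs a single finite list of triples valid simultaneously for all $f,g,h$, and only the uniform bound makes that possible. Everything else is bookkeeping; the only formal point to keep in mind is that $G_{f,g,h}$ requires $f,g,h$ to have rational (equivalently integral) $q$--expansions, so all triples above are taken in $S_m(\Gamma_0(N))_{\mathbb Q}$.
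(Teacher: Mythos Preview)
Your proposal is correct and follows essentially the same approach as the paper's proof: use the uniform bound of Lemma \ref{agr-14} to conclude that only finitely many isomorphism types of $G_{f,g,h}$ occur as $(f,g,h)$ ranges over all linearly independent triples, pick representative triples $(f_i,g_i,h_i)$, apply Lemma \ref{agr-4} to each, and take the finite union of the resulting thin sets. Your write-up is in fact slightly more explicit than the paper's (e.g.\ you spell out why a finite union of thin sets is thin and flag the rational $q$--expansion requirement), but the argument is the same.
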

\begin{proof} By Lemma \ref{agr-14}, when $f, g$ ranges over pairs of linearly independent forms in  $S_m(\Gamma_0(N))_{\mathbb Q}$, the groups in $\mathcal G_{f, g}$ are isomorphic to the
  subgroups of the symmetric group of $l_{m, N}$--letters. Thus, in the corresponding union $\cup_{f, g} \ \mathcal G_{f, g}$, there are only finitely many groups up to isomorphism. Let us call the
  representatives $G_{f_i, g_i, h_i}$, $1\le i\le k$.    Next,  we select thin subsets $A_{f_i, g_i, h_i}$ as in Lemma \ref{agr-4}. Then, the union
  $$
  A_{m, N}\overset{def}{=} \cup^k_{i=1} \ A_{f_i, g_i, h_i}
  $$
  is a thin subset (see the paragraph before the statement of Lemma \ref{agr-4}). For $\lambda \in \mathbb Z-  A_{m, N}$, we have   $G_{f_i, g_i, h_i,} \simeq G_{f_i, g_i, h_i, \lambda}$, $1\le i \le k$.
  The theorem follows.  \end{proof}

\begin{Thm}\label{agr-1}  Assume that either $m=2$ and $X_0(N)$ is not hyperelliptic (implies $g(\Gamma_0(N))\ge 3$) or $m\ge 4$ is  an even integer such that
  $\dim S_m(\Gamma_0(N)) \ge \max{(g(\Gamma_0(N))+2, 3)}$. Assume that $f, g \in S_m(\Gamma_0(N))$ are linearly independent. Then, 
   there exists a subgroup   $G$ of the symmetric group of $l_{m, N}$--letters (see (\ref{agr-5}))  such that  $\Xi'_G$ is Zariski dense in
    $S_m(\Gamma_0(N))_{\mathbb Q}$ (see Definition \ref{agr-12}). 
    \end{Thm}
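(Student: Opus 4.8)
The plan is to combine Lemma \ref{agr-15} (which already produces a $G\in\mathcal G$ with $\Xi_G$ Zariski dense) with Lemma \ref{agr-6}(ii) (which produces a non-empty Zariski open set $\calU_{f,g}$ on which the map \eqref{map} is birational over $\mathbb Q$), and then verify that the intersection is still dense and lies inside $\Xi'_G$ for the group $G$ coming from Lemma \ref{agr-15}. First I would apply Lemma \ref{agr-15} to the fixed linearly independent pair $f,g\in S_m(\Gamma_0(N))_{\mathbb Q}$ to obtain a subgroup $G\in\mathcal G=\mathcal G_{f,g}$ with $\overline{\Xi}_G=S_m(\Gamma_0(N))_{\mathbb Q}$. (Here one should first replace $f,g$ by $\mathbb Q$-rational forms spanning the same subspace, or simply observe that $\mathcal G_{f,g}$ only depends on the $\mathbb Q$-span; this is the same harmless adjustment used implicitly in Corollary \ref{agr-16}.)

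Next I would show $\calU_{f,g}\cap\Xi_G\subseteq\Xi'_G$. Indeed, for $h\in\calU_{f,g}$ Lemma \ref{agr-6}(ii) gives $\mathbb Q(g/f,h/f)=\mathbb Q(X_0(N))$, so by Lemma \ref{agr-8}
$$
\deg\bigl(Q_{f,g,h}(g/f,\cdot)\bigr)=\bigl[\mathbb Q(g/f,h/f):\mathbb Q(g/f)\bigr]=\bigl[\mathbb Q(X_0(N)):\mathbb Q(g/f)\bigr],
$$
which is exactly the defining condition of $\Xi'_G$ in Definition \ref{agr-12}. Hence any $h$ that lies both in the dense set $\Xi_G$ and in the open set $\calU_{f,g}$ belongs to $\Xi'_G$.

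It then remains to see that $\calU_{f,g}\cap\Xi_G$ is Zariski dense in $S_m(\Gamma_0(N))_{\mathbb Q}$. Since $S_m(\Gamma_0(N))_{\mathbb Q}$ is irreducible, a non-empty Zariski open subset is dense, and the intersection of a dense subset with a non-empty open subset is again dense: concretely, $\overline{\calU_{f,g}\cap\Xi_G}\supseteq \calU_{f,g}\cap\overline{\Xi_G}=\calU_{f,g}\cap S_m(\Gamma_0(N))_{\mathbb Q}=\calU_{f,g}$, and $\overline{\calU_{f,g}}=S_m(\Gamma_0(N))_{\mathbb Q}$ because $\calU_{f,g}$ is non-empty open in an irreducible space. (One also removes the proper closed subset $\mathbb Qf+\mathbb Qg$, which changes nothing by irreducibility.) Therefore $\Xi'_G\supseteq\calU_{f,g}\cap\Xi_G$ is Zariski dense, and since every group in $\mathcal G$ is a subgroup of the symmetric group on $l_{m,N}$ letters by Lemma \ref{agr-14}, $G$ has the required form. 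The only mild subtlety — the "main obstacle", though it is minor here — is bookkeeping about $\mathbb Q$-structures: making sure $\calU_{f,g}$, which Lemma \ref{agr-6}(ii) produces as the $\mathbb Q$-points of a Zariski open set defined over $\mathbb Q$, really does meet the dense set $\Xi_G$, i.e. that $\Xi_G$ is dense as a subset of the $\mathbb Q$-variety $S_m(\Gamma_0(N))_{\mathbb Q}$ in the sense used in Lemmas \ref{agr-6} and \ref{agr-15}; this is already built into the statements of those lemmas, so the argument goes through.
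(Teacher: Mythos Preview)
Your proposal is correct and follows essentially the same route as the paper: invoke Lemma~\ref{agr-15} to get a $G\in\mathcal G$ with $\Xi_G$ dense, invoke Lemma~\ref{agr-6}(ii) to get the open set $\calU_{f,g}$, use Lemma~\ref{agr-8} to see $\calU_{f,g}\cap\Xi_G\subseteq\Xi'_G$, and appeal to Lemma~\ref{agr-14} for the embedding into $S(l_{m,N})$. You supply a bit more detail than the paper on why the intersection of a dense set with a non-empty open set remains dense, and you flag the (harmless) passage from $f,g\in S_m(\Gamma_0(N))$ to $\mathbb Q$-rational representatives, but the argument is the same.
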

\begin{proof} First, by Lemma \ref{agr-14}, every group $G\in \mathcal G$ can be isomorphically embedded into the symmetric group of $l_{m, N}$--letters. Next, there exists 
 $G\in  \mathcal G$ such that $\Xi_G$ is Zariski dense in $S_m(\Gamma_0(N))_{\mathbb Q}$ (see Lemma \ref{agr-15}). We fix such $G$. 

By Lemma \ref{agr-6},  there exists a non-empty Zariski open set $\calU\subset S_m(\Gamma_0(N))_{\mathbb Q}$ such that $\mathbb Q(g/f, h/f)=\mathbb Q(X_0(N))$ for $h\in \calU$.
Since $\Xi_G$ is Zariski dense in $S_m(\Gamma_0(N))_{\mathbb Q}$,  the intersection  $\Xi_G\cap \mathcal U$ is also Zariski dense in 
$S_m(\Gamma_0(N))_{\mathbb Q}$. Moreover, since  for $h\in \Xi_G\cap \mathcal U$, we have  $\mathbb Q(g/f, h/f)=\mathbb Q(X_0(N))$, we obtain that
the degree of $Q_{f, g, h}(g/f, \cdot)$ is $[\mathbb Q(X_0(N)): \mathbb Q(g/f)]$ (see Lemma \ref{agr-8}). Hence, $\Xi_G\cap \mathcal U\subset \Xi'_G$ by Definition \ref{agr-12}.
\end{proof}

\section{The Case of Hyperelliptic Modular Curves}\label{cmcg}
The goal of the present section is to prove  Theorem \ref{cmcg-1}. The reader should review Definitions \ref{agr-9} and \ref{agr-12}, and
Theorem \ref{agr-1}.

Recall from  \cite{Ogg} that  $X_0(N)$ is a  hyperelliptic curve if and only if   $N$ belongs to the set $\{22, 23, 26, 28, 29, 30,  31, 33, 35, 37, 39, 40, 41, 46, 47, 48, 50, 59, 71\}$.
 Select  $f, g\in S_2(\Gamma_0(N))_{\mathbb Q}$  such that their orders at $\mathfrak a_\infty$  (a cusp corresponding to $\infty$)  satisfy that
 $\nu_{\mathfrak a_\infty}(g)$ is largest possible, and  $\nu_{\mathfrak a_\infty}(f)= \nu_{\mathfrak a_\infty}(g) - 1$. The existence of $f$ and $g$ is easy to check using SAGE system. 
 
 \begin{Thm} \label{cmcg-1} Assume that $X_0(N)$ is a  hyperelliptic curve, and $f, g\in S_2(\Gamma_0(N))_{\mathbb Q}$  as above.
   Then, we have the following:
    \begin{itemize}
    \item[(i)]  The  extension $\mathbb Q(g/f)\subset \mathbb Q\left(X_0(N)\right)$  has the degree two, and therefore the Galois group is $\mathbb Z/2\mathbb Z$.
    \item[(ii)]  For all even integers $m\ge 4$ there exists  a non-empty Zariski open set
$\calU_m \subset S_m(\Gamma_0(N))_{\mathbb Q}$  such that
      $L_{f^{\frac{m}{2}}, gf^{\frac{m}{2}-1}, h }=\mathbb Q\left(X_0(N)\right)=  \mathbb Q\left(g/f, h/f^{m/2}\right)$, $h\in \calU_m$. We have  $\calU_m\subset \Xi'_{\mathbb Z/2\mathbb Z}$
      (see Definition \ref{agr-12}).
 Moreover, $\mathcal G_{f^{\frac{m}{2}}, gf^{\frac{m}{2}-1}}$  consists of $\mathbb Z/2\mathbb Z$ and the trivial group.
      \end{itemize}
  \end{Thm}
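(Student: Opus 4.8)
The plan is to treat the two parts separately, with part (i) being essentially a structural observation about hyperelliptic curves and part (ii) following by combining (i) with the machinery of Section \ref{agr}. For part (i), I would first recall that $X_0(N)$ hyperelliptic means there is a degree-two map $X_0(N) \to \mathbb{P}^1$ defined over $\mathbb{Q}$ (the hyperelliptic involution and its quotient are defined over $\mathbb{Q}$ in all the relevant cases, and in fact for $g(\Gamma_0(N)) \ge 2$ the hyperelliptic map is canonical, hence rational). The key point is to identify $g/f$ with (a coordinate pulling back via) this degree-two map. The choice of $f,g$ — with $\nu_{\mathfrak{a}_\infty}(g)$ maximal and $\nu_{\mathfrak{a}_\infty}(f) = \nu_{\mathfrak{a}_\infty}(g) - 1$ — is designed so that $g/f$ has a simple pole (divisor of poles of degree $1$ contributed at $\mathfrak a_\infty$) only if one is careless; the actual argument I expect is that one computes $\mathrm{div}(g/f) = \mathfrak{c}_g - \mathfrak{c}_f$ via \cite[Lemma 2.1]{Muic2} as in Corollary \ref{agr-16}, and the extremal vanishing conditions at $\infty$ force the degree of the divisor of zeros of $g/f$ to be $\le 2$; since $f, g$ are linearly independent it is not $\le 1$ (that would make $g/f$ nonconstant of degree one, i.e. $X_0(N) \cong \mathbb{P}^1$, contradicting $g(\Gamma_0(N)) \ge 2$), hence exactly $2$. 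By Corollary \ref{agr-16}, $[\mathbb{Q}(X_0(N)) : \mathbb{Q}(g/f)] = 2$, and a degree-two field extension is automatically Galois with group $\mathbb{Z}/2\mathbb{Z}$. (This is also where Lemma 2 of \cite{mshi}, reproved in Section \ref{tpoal}, presumably enters, to justify that such $f,g$ exist and have the stated vanishing orders.)

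For part (ii), I would set $F = f^{m/2}$ and $\tilde G = g f^{m/2 - 1}$, which lie in $S_m(\Gamma_0(N))_{\mathbb{Q}}$ and are $\mathbb{Q}$-linearly independent (since $f, g$ are), with $\tilde G / F = g/f$. Then apply Lemma \ref{agr-6}(ii) to the pair $(F, \tilde G)$: this requires the dimension hypothesis $\dim S_m(\Gamma_0(N)) \ge \max(g(\Gamma_0(N)) + 2, 3)$, which holds for $m \ge 4$ in the hyperelliptic range (one checks via the explicit dimension formula, cf. \cite[Lemma 2.2]{MuKo}, that for the finitely many $N$ in Ogg's list this inequality is satisfied once $m \ge 4$). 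Lemma \ref{agr-6}(ii) then furnishes a non-empty Zariski open $\calU_m \subset S_m(\Gamma_0(N))_{\mathbb{Q}}$ with $\mathbb{Q}(\tilde G/F, h/F) = \mathbb{Q}(X_0(N))$ for $h \in \calU_m$, i.e. $\mathbb{Q}(g/f, h/f^{m/2}) = \mathbb{Q}(X_0(N))$. By Lemma \ref{agr-8} the degree of $Q_{F, \tilde G, h}(g/f, \cdot)$ then equals $[\mathbb{Q}(X_0(N)) : \mathbb{Q}(g/f)] = 2$, so by Definition \ref{agr-12} we have $\calU_m \subset \Xi'_{G}$ where $G = G_{F, \tilde G, h}$; and since $L_{F, \tilde G, h}$ is the splitting field over $\mathbb{Q}(g/f)$ of a degree-two polynomial it equals $\mathbb{Q}(g/f, h/f^{m/2}) = \mathbb{Q}(X_0(N))$, forcing $G_{F, \tilde G, h} = \mathrm{Gal}(\mathbb{Q}(X_0(N))/\mathbb{Q}(g/f)) = \mathbb{Z}/2\mathbb{Z}$. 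This gives $\calU_m \subset \Xi'_{\mathbb{Z}/2\mathbb{Z}}$.

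It remains to pin down $\mathcal{G}_{F, \tilde G}$. Every $G_h \in \mathcal{G}_{F, \tilde G}$ embeds into the symmetric group on $[\mathbb{Q}(X_0(N)) : \mathbb{Q}(g/f)] = 2$ letters by Lemma \ref{agr-13}, hence is a subgroup of $S(2)$, i.e. either $\mathbb{Z}/2\mathbb{Z}$ or the trivial group. Both occur: $\mathbb{Z}/2\mathbb{Z}$ is realized on the dense set $\calU_m$ just constructed, and the trivial group is realized by any $h$ for which $\mathbb{Q}(g/f, h/f^{m/2}) = \mathbb{Q}(g/f)$ — for instance I would check that $h = f^{m/2}(g/f)^k = g^k f^{m/2 - k}$ for a suitable integer $k$ (or more simply any $h$ in the $\mathbb{Q}(g/f)$-span issues aside, a polynomial combination lying in $\mathbb{Q}(g/f) \cdot f^{m/2}$) gives $h/f^{m/2} \in \mathbb{Q}(g/f)$, so $Q_{F,\tilde G,h}(g/f, \cdot)$ is linear and $G_h$ is trivial; one must only exhibit one such $h \notin \mathbb{Q}f^{m/2} + \mathbb{Q} g f^{m/2-1}$, which is easy for $m \ge 4$. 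Hence $\mathcal{G}_{F, \tilde G}$ consists exactly of $\mathbb{Z}/2\mathbb{Z}$ and the trivial group.

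\emph{Main obstacle.} The substantive point is part (i) — establishing that $[\mathbb{Q}(X_0(N)) : \mathbb{Q}(g/f)] = 2$ for the specified $f, g$, which rests on the divisor computation and on the existence-and-vanishing statement for $f, g$ (the content of \cite[Lemma 2]{mshi}); everything in part (ii) is then a routine application of Lemma \ref{agr-6}, Lemma \ref{agr-8}, and Lemma \ref{agr-13}, plus the easy observation that subgroups of $S(2)$ leave no room for anything beyond $\mathbb{Z}/2\mathbb{Z}$ and $\{1\}$.
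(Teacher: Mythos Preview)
Your treatment of part (ii) is correct and matches the paper's argument essentially step for step: verify the dimension hypothesis for $m\ge 4$, apply Lemma~\ref{agr-6}(ii) to obtain $\calU_m$, use Lemma~\ref{agr-8} to see the minimal polynomial has degree two so that $L_{F,\tilde G,h}=\mathbb Q(X_0(N))$ and $\calU_m\subset \Xi'_{\mathbb Z/2\mathbb Z}$, and then exhibit the trivial group via an explicit $h$ (the paper uses $h=g^2 f^{m/2-2}$, your $g^k f^{m/2-k}$ works the same way). The containment of every $G_h$ in $S(2)$ via Lemma~\ref{agr-13} is also exactly what the paper does.

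The gap is in part (i). Your divisor argument does \emph{not} give degree $\le 2$; it only gives degree $\le g(\Gamma_0(N))$. With $\nu_\infty(g)$ maximal and $\nu_\infty(f)=\nu_\infty(g)-1$, the SAGE basis shows (for these hyperelliptic $N$) that $\nu_\infty(f)=g-1$ and $\nu_\infty(g)=g$, so the cancellation in $\mathfrak c_g-\mathfrak c_f$ at $\mathfrak a_\infty$ removes only $g-2$ from the total degree $2g-2$, leaving an upper bound of $g$ for the degree of the divisor of zeros --- exactly the generic bound of Theorem~\ref{sec-1}, not $2$. The paper says this explicitly in the proof of Lemma~\ref{cmcg-2}: for $g\ge 3$ ``above method does not work since the degree of divisors $\mathfrak c_f$ and $\mathfrak c_g$ is $2(g(\Gamma_0(N))-1)\ge 4$.''

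You have also misread what \cite[Lemma~2]{mshi} / Lemma~\ref{cmcg-2} provides. It is not an existence/vanishing statement for $f,g$ (that is checked directly in SAGE, as the paper notes just before the theorem); Lemma~\ref{cmcg-2} \emph{is} the assertion that $[\mathbb C(X_0(N)):\mathbb C(g/f)]=2$, and its proof for $g\ge 3$ proceeds by factoring the map $(f_{g-3}:f_{g-2}:f_{g-1})$ through the canonical (hence $2$-to-$1$) map, bounding $d(f_{g-3},f_{g-2},f_{g-1})\cdot\deg P$ via \cite{Muic2}, and then computing the conic $P_{f_{g-3},f_{g-2},f_{g-1}}$ case-by-case in SAGE to conclude $d=2$ and $\mathbb C(\calC(f_{g-3},f_{g-2},f_{g-1}))=\mathbb C(g/f)$. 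Part (i) then follows by combining this with Corollary~\ref{agr-16}. So the ``substantive point'' you correctly flag is indeed (i), but the mechanism is the hyperelliptic factorization of the canonical map plus explicit equations, not a direct divisor-of-zeros bound.
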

 \begin{proof} First, (i) follows from Corollary \ref{agr-16} and Lemma \ref{cmcg-2} (see Section \ref{tpoal}). Next, we prove that $\mathcal G_{f^{\frac{m}{2}}, gf^{\frac{m}{2}-1}}$ contains a trivial group. 
 Indeed, note that $f^{\frac{m}{2}}$, $gf^{\frac{m}{2}-1}$ and  $g^2f^{\frac{m}{2}-2}$ are $\mathbb Q$--linearly independent cuspidal modular
   forms in $S_m(\Gamma_0(N))_{\mathbb Q}$. We have
   $$
   \mathbb Q\left(gf^{\frac{m}{2}-1}/f^{\frac{m}{2}}, \ g^2f^{\frac{m}{2}-2}/f^{\frac{m}{2}}\right)= \mathbb Q\left(g/f, \ (g/f)^2\right)=
   \mathbb Q\left(g/f\right)= \mathbb Q\left(gf^{\frac{m}{2}-1}/f^{\frac{m}{2}}\right).
   $$
   This implies that  $\mathcal G_{f^{\frac{m}{2}}, gf^{\frac{m}{2}-1}}$  contains the trivial group. Next, by (i), 
    $$
   \mathbb Q\left(gf^{\frac{m}{2}-1}/f^{\frac{m}{2}}\right)=  \mathbb Q\left(g/f\right)\subset  \mathbb Q\left(X_0(N)\right)
   $$
   is a  degree two extension. Thus, all groups in $\mathcal G_{f^{\frac{m}{2}}, gf^{\frac{m}{2}-1}}$  are isomorphically contained in $S(2)=\mathbb Z/2\mathbb Z$. Now, 
   we prove the first  claim (ii) which also completes the proof of the last.  Since $g(\Gamma_0(N))\ge 2$,
for $m\ge 4$,  by the well--known dimension formula (see for example preliminary \cite[Lemma 2.2]{MuKo})  we have that 
  \begin{align*}
    \dim S_m(\Gamma_0(N))& \ge (m-1)(g(\Gamma_0(N))-1) + \\
    & +\left(\frac{m}{2}-1\right)\cdot \# \left(\text{the number of non--equivalent cusps for $\Gamma_0(N)$}\right)\\
    &\ge (m-1)(g(\Gamma_0(N))-1)  +\left(\frac{m}{2}-1\right) \ge g(\Gamma_0(N))+2.
  \end{align*}
  Thus,  by Lemma \ref{agr-6} (ii), there exists a    non-empty Zariski open set $\calU_m$ in  $S_m(\Gamma_0(N))_{\mathbb Q}$   such that
$\mathbb Q\left(X_0(N)\right)=  \mathbb Q\left(g/f, h/f^{m/2}\right)$, 
  for $h\in \calU_m$.   We obtain that
  the degree of $Q_{f^{\frac{m}{2}}, gf^{\frac{m}{2}-1}, h}(g/f, \cdot)$ is $[\mathbb Q(X_0(N)): \mathbb Q(g/f)]$ (see Lemma \ref{agr-8}). Hence, $\mathcal U_m\subset \Xi'_{\mathbb Z/2\mathbb Z}$
  by Definition \ref{agr-12}.
The claim (ii) follows. 
\end{proof}

We consider an example to Theorem \ref{cmcg-1}. Let $N=30$. Then, $g(\Gamma_0(30))=3$.  Using SAGE we find the following base of $S_2(\Gamma_0(30))$:
\begin{align*}
f_0 &=  q - q^4 - q^6 - 2q^7 + q^9 + q^{10}+\cdots\\
f_1 &=q^2 - q^4 - q^6 - q^8 + q^{10} + \cdots \\
f_2 &=q^3 + q^4 - q^5 - q^6 - 2q^7 - 2q^8 + q^{10}+\cdots \\
\end{align*}
We let $f=f_1$ and $g=f_2$. Now, we have that
\begin{align*}
  f^2 &= q^4 -2q^6 -q^8+ 5q^{12}+ \cdots\\
  fg &=   q^5 + q^6 - 2q^7 - 2q^8 - 2q^9 - 2q^{10} + 2q^{11}+ 3q^{12}\cdots
\end{align*}
are elements of $S_4(\Gamma_0(30))$. By listing the basis of $S_4(\Gamma_0(30))$ using SAGE, we construct a new base as follows: 
$F=F_0=f^2$, $G=F_1=fg$, $F_i=q^{i-1} +\ldots$, $2\le i \le 4$,  $F_i=q^{i+1} +\ldots$, $5\le i \le 14$.

Now, we use {\bf the Trial method} for $(F, G, H)$ (see \cite[Section 7]{MuKo}) where
$H=\sum_{i=2}^{13} a_i  F_i$,  $a_i\in \mathbb Z$, is to be determined. This method is very simple.  Given $H$, we compute the
polynom $P_{F, G, H}$ using SAGE. When
$$
\deg{P_{F, G, H}}> \left(\dim{S_{4}(\Gamma_0(30))} + g(\Gamma_0(30)) -1\right)/2= (14+ 3-1)/2=8,
$$
combining with Lemma \ref{agr-2}, we obtain
$\mathbb Q(X_0(N))=\mathbb Q\left(G/F, H/F\right)=\mathbb Q\left(g/f, H/f^2\right)$. 
Thus, in view of notation introduced in Theorem \ref{cmcg-1}, we may let
$h= H$.

Applying the algorithm, we may obtain $h=H=F_3$. The corresponding polynomial $Q_{f^2, fg, H}(\lambda, T)$ (a dehomogenization of $P_{f^2, fg, H}$
(see Definition \ref{agr-9})) is given by 
\begin{align*}
  &225 \lambda^6\left(1 -\lambda - \lambda^2 + \lambda^3\right)T^2 -\lambda^3 \big(237 - 370\lambda
+ 319 \lambda^2 + 341 \lambda^3 -310\lambda^4 -101 \lambda^5 + 400\lambda^6 -\\ & - 10\lambda^7 -
64\lambda^8 + 32\lambda^9 \big)T + 12 - 44 \lambda - 85 \lambda^2 + 153\lambda^3 + 1073\lambda^4
+ 1375\lambda^5 - 420\lambda^6 - 660\lambda^7 - \\ & - 30\lambda^8 + 
162 \lambda^9 -26\lambda^{10} - 118\lambda^{11} + 
84\lambda^{12} + 20\lambda^{13} + 12\lambda^{14} - 4\lambda^{15}.\\
\end{align*}

We see that $\mathbb Q(f/g)\subset \mathbb Q(X_0(N))$ is a quadratic exension as it should be by Theorem \ref{cmcg-1} (i).
Next, we apply methods of Section \ref{agr-hit} (see Theorem \ref{hit-main}). We easily see that $\wit{Q}_{f^2, fg, H}(\lambda, T)$ is given by
\begin{align*}
  &T^2 -\lambda^3 \big(237 - 370\lambda
+ 319 \lambda^2 + 341 \lambda^3 -310\lambda^4 -101 \lambda^5 + 400\lambda^6 - 10\lambda^7 -
64\lambda^8 + 32\lambda^9 \big)T  \\ &
+225 \lambda^6\left(1 -\lambda - \lambda^2 + \lambda^3\right)\cdot
\big(12 - 44 \lambda - 85 \lambda^2 + 153\lambda^3 + 1073\lambda^4
+ 1375\lambda^5 - 420\lambda^6 \\ & - 660\lambda^7 -  30\lambda^8 + 
162 \lambda^9 -26\lambda^{10} - 118\lambda^{11} + 
84\lambda^{12} + 20\lambda^{13} + 12\lambda^{14} - 4\lambda^{15}\big).\\
\end{align*}
We let $\lambda\in \mathbb Z- A_{f, g, h}$ and reduce polynomial  $(\text{mod} \ 5)$. We obtain 
$T^2 -\lambda^3 \big(2 -\lambda^2  + \lambda^3   - \lambda^5  +
\lambda^8 +2\lambda^9 \big)T$.  Letting $\lambda \equiv 1 \ (\text{mod} \ 5)$ (possible as explained in  the proof Theorem \ref{hit-main}) we obtain  $T^2-T=T(T-1)$. Considered $G_{f^2, fg, H, \lambda}$ as
a subgroup of the symmetric group $S(2)$, we see that it contains a
transposition. Hence, $G_{f^2, fg, H, \lambda}=S(2)$. This recovers the Galois group of the  original polynomial $G_{f^2, fg, H}$ by
using Hilbert's irreducibility. More interesting examples are given in Section \ref{cnmc}.

\section{The Case of Non--Hyperelliptic Modular Curves}\label{cnmc}

By \cite{Ogg},  $X_0(N)$ is non--hyperelliptic for $N\in \{34, 38, 42, 43, 44, 45, 51-58, 60-70\}$ or $N\ge 72$. This implies $g(\Gamma_0(N))\ge 3$.
We assume that in this section. The main result of the present section is the following theorem. The reader should compare to Theorem \ref{cmcg-1}. The bound for 
$[\mathbb Q(X_0(N)): \mathbb Q(g/f)]$ in the theorem below is far more better than the generic bound $[\mathbb Q(X_0(N)): \mathbb Q(g/f)]\le 2g(\Gamma_0(N))-2$ given by Corollary \ref{agr-16}. 

\begin{Thm}\label{sec-1} Maintaining above assumptions,  we select $f$ and $g$ in $S_m(\Gamma_0(N))_{\mathbb Q}$ with largest possible orders of vanishing at $\mathfrak a_\infty$ (as in Theorem \ref{cmcg-1}
  but they are not necessarily consecutive here), $\nu_\infty(f)< \nu_\infty(g)$. Then, $[\mathbb Q(X_0(N)): \mathbb Q(g/f)] \le g(\Gamma_0(N))$. Consequently, for
  $h\in S_2(\Gamma_0(N))_{\mathbb Q} - \left(\mathbb Q f+ \mathbb Q g\right)$, $G_{f, g, h}$ can be embedded as  a subgroup of the symmetric group of 
  $g(\Gamma_0(N))$--letters $S_{g(\Gamma_0(N))}$ (non--uniquely). Moreover, there exists a subgroup $G$ of $S_{g(\Gamma_0(N))}$ such that $\Xi'_G$ is Zariski dense in (see Definition \ref{agr-12}).
  \end{Thm}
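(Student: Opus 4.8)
The plan is to establish the degree bound $[\mathbb{Q}(X_0(N)):\mathbb{Q}(g/f)]\le g(\Gamma_0(N))$ first, and then to deduce the two group-theoretic assertions by feeding this bound into the machinery of Section~\ref{agr}. For the degree bound I would argue with divisors of modular forms, exactly as in the proof of Corollary~\ref{agr-16}. By that corollary, $[\mathbb{Q}(X_0(N)):\mathbb{Q}(g/f)]$ equals the degree of the divisor of zeros of the function $g/f$, equivalently the degree of its polar divisor. Write $\mathfrak{c}_f,\mathfrak{c}_g$ for the effective divisors of the forms $f,g$; then $\mathrm{div}(g/f)=\mathfrak{c}_g-\mathfrak{c}_f$, and $\deg\mathfrak{c}_f=\deg\mathfrak{c}_g=l_{m,N}$ by \cite[Lemma~2.1]{Muic2}. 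At $\mathfrak{a}_\infty$ the function $g/f$ has a zero of order $\nu_{\mathfrak{a}_\infty}(g)-\nu_{\mathfrak{a}_\infty}(f)>0$, so $\mathfrak{a}_\infty$ is not a pole of $g/f$, and the polar divisor of $g/f$ is $\le\ \mathfrak{c}_f-\nu_{\mathfrak{a}_\infty}(f)\cdot\mathfrak{a}_\infty$. Therefore
\[
[\mathbb{Q}(X_0(N)):\mathbb{Q}(g/f)]\ \le\ l_{m,N}-\nu_{\mathfrak{a}_\infty}(f).
\]

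The decisive input is now a lower bound for $\nu_{\mathfrak{a}_\infty}(f)$. Expanding cusp forms in echelon form with respect to the local parameter at $\mathfrak{a}_\infty$, every nonzero $h\in S_m(\Gamma_0(N))_{\mathbb{Q}}$ has $\nu_{\mathfrak{a}_\infty}(h)$ equal to the order of one of the basis vectors, so the set of values taken by $\nu_{\mathfrak{a}_\infty}(\cdot)$ on nonzero cusp forms consists of exactly $\dim S_m(\Gamma_0(N))$ pairwise distinct non-negative integers. Since $f$ realizes the second largest of these values, $\nu_{\mathfrak{a}_\infty}(f)\ge \dim S_m(\Gamma_0(N))-2$. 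Substituting into the displayed inequality and using the definition \eqref{agr-5} of $l_{m,N}$ gives $[\mathbb{Q}(X_0(N)):\mathbb{Q}(g/f)]\le g(\Gamma_0(N))+1-\epsilon_m$; for $m=2$ (where $\epsilon_2=1$) this is already $g(\Gamma_0(N))$, and for $m\ge 4$ one gains the extra unit by observing that cusp forms of weight $\ge 4$ vanish along the cusps, so the smallest attained order is $\ge 1$ and hence $\nu_{\mathfrak{a}_\infty}(f)\ge \dim S_m(\Gamma_0(N))-1$. I expect this last bookkeeping — squeezing out the sharp constant $g(\Gamma_0(N))$ rather than $g(\Gamma_0(N))+1$ — to be the only delicate point of the whole argument; it is precisely here that non-hyperellipticity (which guarantees $g(\Gamma_0(N))\ge 3$ and $\dim S_m(\Gamma_0(N))\ge 3$) and the exact shape of \eqref{agr-5} enter.

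Granting the degree bound, the embedding statement is immediate from Lemma~\ref{agr-13}, which exhibits $G_{f,g,h}$ as a subgroup of the symmetric group on $[\mathbb{Q}(X_0(N)):\mathbb{Q}(g/f)]$ letters; since $[\mathbb{Q}(X_0(N)):\mathbb{Q}(g/f)]\le g(\Gamma_0(N))$, we may view this group inside $S_{g(\Gamma_0(N))}$ by fixing the remaining letters. For the last assertion I would re-run the proof of Theorem~\ref{agr-1} verbatim, now with the sharp bound $g(\Gamma_0(N))$ in place of $l_{m,N}$: by Lemma~\ref{agr-15} there is $G\in\mathcal{G}_{f,g}$ with $\Xi_G$ Zariski dense in $S_m(\Gamma_0(N))_{\mathbb{Q}}$; by Lemma~\ref{agr-6}(ii) there is a nonempty Zariski open set $\calU\subset S_m(\Gamma_0(N))_{\mathbb{Q}}$ on which $\mathbb{Q}(g/f,h/f)=\mathbb{Q}(X_0(N))$; the intersection $\Xi_G\cap\calU$ is Zariski dense, and for $h$ in it Lemma~\ref{agr-8} forces $\deg Q_{f,g,h}(g/f,\cdot)=[\mathbb{Q}(X_0(N)):\mathbb{Q}(g/f)]$, so that $h\in\Xi'_G$; hence $\Xi'_G$ is Zariski dense in $S_m(\Gamma_0(N))_{\mathbb{Q}}$. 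Finally, choosing any $h\in\Xi'_G$ and invoking Lemma~\ref{agr-13} once more realizes $G\cong G_{f,g,h}$ as a transitive subgroup of the symmetric group on $[\mathbb{Q}(X_0(N)):\mathbb{Q}(g/f)]\le g(\Gamma_0(N))$ letters, hence as a subgroup of $S_{g(\Gamma_0(N))}$, which completes the proof.
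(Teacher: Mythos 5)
Your overall strategy coincides with the paper's: bound the degree of the polar divisor of $g/f$ by exploiting the cancellation between $\mathfrak c_f$ and $\mathfrak c_g$ at $\mathfrak a_\infty$, conclude $[\mathbb Q(X_0(N)):\mathbb Q(g/f)]\le g(\Gamma_0(N))$, and then feed this into Lemma \ref{agr-13} and a rerun of the proof of Theorem \ref{agr-1}. The group-theoretic half of your argument is correct and is exactly what the paper does. The divisor computation, however, contains a concrete error that you should repair.

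The divisor $\mathfrak c_f$ attached to a weight-two cusp form is the divisor of the associated holomorphic differential, and its multiplicity at the cusp is $\mathfrak c_f(\mathfrak a_\infty)=\nu_{\mathfrak a_\infty}(f)-1$, not $\nu_{\mathfrak a_\infty}(f)$; the paper records this explicitly as $\mathfrak c_f=(l-1)\mathfrak a_\infty+\cdots$ with $l=\nu_{\mathfrak a_\infty}(f)$. Hence the cancellation at $\mathfrak a_\infty$ removes only $\nu_{\mathfrak a_\infty}(f)-1$ from $\deg\mathfrak c_f=l_{2,N}=2g(\Gamma_0(N))-2$, and the correct inequality is $[\mathbb Q(X_0(N)):\mathbb Q(g/f)]\le 2g(\Gamma_0(N))-1-\nu_{\mathfrak a_\infty}(f)$. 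Your stated intermediate bound $\le l_{2,N}-\nu_{\mathfrak a_\infty}(f)$ is in fact false: for $\Gamma_0(63)$ one has $g(\Gamma_0(63))=5$, $l_{2,63}=8$ and $\nu_{\mathfrak a_\infty}(f)=4$, so your inequality would force the degree to be at most $4$, whereas Proposition \ref{sec-3} shows it equals $5$. You nonetheless land on the right final answer because you simultaneously underestimate $\nu_{\mathfrak a_\infty}(f)$: the orders of vanishing attained by nonzero forms in $S_2(\Gamma_0(N))$ are $\dim S_2(\Gamma_0(N))=g(\Gamma_0(N))$ distinct integers each at least $1$ (cusp forms of every weight, including $2$, vanish at the cusps), so the second largest is at least $g(\Gamma_0(N))-1$, not merely $g(\Gamma_0(N))-2$. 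The two off-by-one errors cancel; fixing both gives $2g(\Gamma_0(N))-1-(g(\Gamma_0(N))-1)=g(\Gamma_0(N))$, which is precisely the paper's computation (the paper obtains $\nu_{\mathfrak a_\infty}(f)\ge g(\Gamma_0(N))-1$ from $\dim W_{g(\Gamma_0(N))-1}\ge 2$, the same echelon-basis fact). A final remark: your case distinction on the weight is spurious, since the theorem and the paper's proof concern $S_2(\Gamma_0(N))$ only (the ``Consequently'' clause takes $h\in S_2(\Gamma_0(N))_{\mathbb Q}$), and the vanishing of cusp forms at the cusps is not special to weight at least $4$.
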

\begin{proof}  We let  $W_i=\left\{f\in S_2(\Gamma_0(N)); \  f=0 \ \text{or} \ \nu_{\mathfrak a_\infty}(f)\ge i\right\}$. Then, by the proof of
  \cite[Corollary 5.6]{MuKo},  $\dim W_{g(\Gamma_0(N))-2}\ge 3$. This implies  $\dim   W_{g(\Gamma_0(N))-1}\ge 2$. Obviously, we have $f, g\in
  W_{g(\Gamma_0(N))-1}$. Now, in (\ref{agr-10}) there are a lot more cancellations. Namely, $q$--expansions satisfy 
  $$
  \begin{cases}
    f &= a_lq^l + \cdots, \ \ a_l\neq 0, \ \ l=\nu_\infty(f),\\
     g&= b_{k}q^{k} + \cdots, \ \ b_{k}\neq 0, \ \ k=\nu_\infty(g),\\
  \end{cases}
  $$
  where $k> l\ge g(\Gamma_0(N))-1$.   This implies that
  $$\begin{cases}
    \mathfrak c_f &= (l-1)\mathfrak a_\infty + \cdots, \\
     \mathfrak c_g &= (k-1)\mathfrak a_\infty + \cdots, \\
    \end{cases}$$
  by definition of these divisors \cite[Lemma 2.1]{Muic2}. Thus,   by (\ref{agr-10}), the divisor of zeroes of $g/f$ has degree
  $$
  2(g(\Gamma_0(N))-1)- (l-1)\le  2(g(\Gamma_0(N))-1) - (g(\Gamma_0(N))-2)= g(\Gamma_0(N)).
  $$
  By Lemma \ref{agr-6} and Corollary \ref{agr-16}, we obtain $[\mathbb Q(X_0(N)): \mathbb Q(g/f)] \le g(\Gamma_0(N))$.  The rest of claims follow from Lemma \ref{agr-13} and
  Theorem \ref{agr-1}.
\end{proof}

We continue with an example related to \cite[Corollary 5.8]{MuKo} which shows that the result in Theorem \ref{sec-1} is optimal.
Consider three basis elements of $5$--dimensional space   $S_2(\Gamma_0(63))$ having highest order of zero at $\mathfrak a_\infty$:
	\begin{align*}  
	f\overset{def}{=}&q^4 + q^7 - 4q^{10} + 2q^{13} - 2q^{16} - 4q^{19} + 5q^{22} + \cdots,\\
	g\overset{def}{=}& 2q^5 - q^8 - 3q^{11} - q^{14} + 2q^{17} + q^{23} +\cdots, \\
        h\overset{def}{=}& q^3 - q^6 + q^9 - q^{12} - 2q^{15} - q^{18} - q^{21} + 3q^{24} + \cdots.\\
	\end{align*}
        The reader should observe that $f$ and $g$ are chosen as in the hyperelliptic case (see Theorem \ref{cmcg-1}).
        We apply methods of Section \ref{agr-hit} (see Theorem \ref{hit-main}).
        
        \begin{Prop} \label{sec-3} Maintaining above assumptions, we have $G_{f, g, h}\simeq S(5)$. Moreover, $h\in \Xi'_{S(5)}$, and  $\mathbb Q(g/f, h/f)=\mathbb Q(X_0(63))$. 
  \end{Prop}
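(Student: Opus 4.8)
The plan splits into two essentially independent parts: pinning down the degree of $Q_{f,g,h}(g/f,\cdot)$, and then identifying the Galois group by a Frobenius/Chebotarev argument. First I would compute in SAGE the reduced equation $P_{f,g,h}$ of $\calC(f,g,h)$ and its dehomogenization $Q_{f,g,h}$, exactly as in the $\Gamma_0(30)$ example above, using the Trial method of \cite[Section 7]{MuKo} for the given $f=q^4+\cdots$, $g=2q^5+\cdots$, $h=q^3+\cdots$ in $S_2(\Gamma_0(63))_{\mathbb Q}$. One reads off that the $T$-degree of $Q_{f,g,h}(g/f,\cdot)$ equals $5$. Since $f$ and $g$ have the largest possible orders of vanishing at $\mathfrak a_\infty$ and $\nu_\infty(f)<\nu_\infty(g)$, Theorem \ref{sec-1} gives $[\mathbb Q(X_0(63)):\mathbb Q(g/f)]\le g(\Gamma_0(63))=5$; on the other hand $h/f\in\mathbb Q(X_0(63))$, so by Lemma \ref{agr-8}
$$
5=\deg_T Q_{f,g,h}(g/f,\cdot)=[\mathbb Q(g/f,h/f):\mathbb Q(g/f)]\le[\mathbb Q(X_0(63)):\mathbb Q(g/f)]\le 5 .
$$
Hence every inequality is an equality: $\mathbb Q(g/f,h/f)=\mathbb Q(X_0(63))$, and once the Galois group is identified this will give $h\in\Xi'_{S(5)}$ by Definition \ref{agr-12}.

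In particular $G_{f,g,h}$ is the Galois group of the irreducible degree-$5$ polynomial $Q_{f,g,h}(g/f,\cdot)$ over $\mathbb Q(g/f)$, so it is realized as a \emph{transitive} subgroup of $S(5)$ (cf.\ Lemma \ref{agr-13} and Theorem \ref{sec-1}). I would then invoke the elementary fact that the only transitive subgroup of $S(5)$ containing a transposition is $S(5)$ itself: up to conjugacy the proper transitive subgroups are $C_5$, the dihedral group $D_5$ of order $10$, the Frobenius group of order $20$ ($\mathrm{AGL}(1,5)$), and $A_5$, and a quick inspection of their cycle types (only $(5)$, $(1,1,1,1,1)$, $(2,2,1)$, $(4,1)$, and the even types in $A_5$ occur) shows that none of them contains a $2$-cycle. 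So it suffices to produce a single transposition in $G_{f,g,h}$.

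To do this I would form the normalized polynomial $\wit{Q}_{f,g,h}(\lambda,T)\in\mathbb Z[\lambda,T]$ of Section \ref{agr-hit}, and, running over small primes $p$ and residues $r\in\{0,\dots,p-1\}$ in SAGE (or MAGMA), search for a pair $(p,r)$ for which $\wit{Q}_{f,g,h}(r,T)\bmod p$ is squarefree and factors into irreducible polynomials of degrees $2,1,1,1$. By Theorem \ref{hit-main} any such pair witnesses a transposition in $G_{f,g,h}$, which forces $G_{f,g,h}\simeq S(5)$ and, together with the first paragraph, gives $h\in\Xi'_{S(5)}$ and $\mathbb Q(g/f,h/f)=\mathbb Q(X_0(63))$. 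As a robustness check one may also look for a pair $(p,r)$ yielding an irreducible quintic mod $p$ (a $5$-cycle).

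The only real obstacle is the explicit SAGE computation of $Q_{f,g,h}$ together with the verification that its $T$-degree is exactly $5$, i.e.\ that $\mathfrak a_z\mapsto(f(z):g(z):h(z))$ is birational over $\mathbb Q$; the group-theoretic step is then a one-line argument, and the existence of a suitable prime $p$ is guaranteed by Chebotarev's theorem once $G_{f,g,h}=S(5)$, so the search is certain to terminate.
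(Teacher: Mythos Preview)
Your strategy is correct and follows the same overall template as the paper: compute $Q_{f,g,h}$ in SAGE, use Theorem~\ref{sec-1} together with Lemma~\ref{agr-8} to get $\mathbb Q(g/f,h/f)=\mathbb Q(X_0(63))$, and then apply Theorem~\ref{hit-main} to detect enough cycle types in $G_{f,g,h}\subset S(5)$ to force it to be all of $S(5)$.

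The one genuine difference is in the group-theoretic endgame. The paper exhibits two explicit reductions: modulo $3$ (with $\lambda\equiv -1$) the polynomial $\wit Q_{f,g,h}(\lambda,T)$ becomes an irreducible quintic, giving a $5$-cycle; modulo $7$ (with $\lambda\equiv -1$) it factors as an irreducible quadratic times an irreducible cubic, giving an element of cycle type $(2,3)$ whose cube is a transposition. The paper then invokes the classical fact that a $5$-cycle and a transposition generate $S(5)$. You instead propose to use transitivity (free from irreducibility) plus a direct search for a $(2,1,1,1)$ factorization, appealing to the classification of transitive subgroups of $S(5)$. Both criteria are valid; yours is a touch slicker group-theoretically, but the paper's has the advantage that the witnessing primes $3$ and $7$ are tiny and written down explicitly, so the proof is complete on the page. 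Your version is only a proposal until you actually locate a prime giving the $(2,1,1,1)$ pattern --- and note that your termination remark (``guaranteed by Chebotarev once $G_{f,g,h}=S(5)$'') is a heuristic, not part of the proof, since that is precisely what you are trying to establish.
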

\begin{proof}   The reduced equation of the curve $\calC (f, g, h)$  is computed in SAGE. It is given by the polynomial $P_{f, g, h}$  irreducible over $\mathbb Q$  
  $$
  -2 h^{4} f^{2} -  h f^{5} + h^{5} g + 2 h^{2} f^{3} g + h^{3} f g^{2} -  f^{4} g^{2} + 3 h f^{2} g^{3} - 3 h^{2} g^{4}=P_{f, g, h}(f, g, h)=0.
  $$
This implies 
  $$
 Q_{f, g, h}(\lambda, T)= \lambda T^{5} -2 T^{4} +   \lambda^{2} T^3+ \left(2  \lambda - 3 \lambda^4 \right)T^{2}   + \left(3   \lambda^{3} -1\right)T -   \lambda^{2}.
  $$
 We have (see Section \ref{agr-hit})
 \begin{equation} \label{sec-3000}
   \wit{Q}_{f, g, h}(\lambda, T) = T^{5} -2 T^{4} +   \lambda^{3} T^3+ \left(2  \lambda - 3 \lambda^4 \right)\lambda^2 T^{2}   + \left(3   \lambda^{3} -1\right)\lambda^3 T -   \lambda^{6}.
  \end{equation}
For $\lambda \equiv -1 \ (\text{mod} \ 3)$, reducing 
  $\equiv \ (\text{mod} \ 3)$, the polynomial (\ref{sec-3000}) becomes $T^5+T^4-T^3+T^2+T+1$ which is irreducible over $\mathbb Z/3\mathbb Z$.  Since $5$ is a prime number,
we see that that group $G_{f, g, h}$ must contain a $5$--cycle (see Theorem \ref{hit-main}). 
Similarly, for $\lambda \equiv -1 \ (\text{mod} \ 7)$, the polynomial  (\ref{sec-3000}) becomes a product of two irreducible polynomials
  $$
  T^5-2T^4 -T^3 +2T^2+3T-1= \left(T^2-T+3\right) \cdot \left(T^3-T^2+4T+2 \right).
  $$
  This shows that the Galois group $G_{f, g, h}$ contains a permutation which is a product of commuting $2$--cycle and $3$--cycle. Its cube is a transposition.
  Now, since $G_{f, g, h}$ is a subgroup of $S(5)$ (and $5$ is a prime number) which contains a   $5$--cycle and a transposition,  by the standard argument, the group is $S(5)$
  (see \cite{lang}). Next, by Definition \ref{agr-12}, to prove  $h\in \Xi'_{S(5)}$, we need to show that $[\mathbb Q(X_0(N)): \mathbb Q(g/f)]=g(\Gamma_0(63))=5$. Indeed, by 
  above considerations and Lemma \ref{agr-8}, we have that 
  $$
  [\mathbb Q(g/f, h/f): \mathbb Q(g/f)]= \deg{ Q_{f, g, h}(g/f, \cdot)}=5.
  $$
  But, since by Theorem \ref{sec-1} we have
  $$
  [\mathbb Q(X_0(63)): \mathbb Q(g/f)]\le g(\Gamma_0(63))=5,
  $$
  we obtain
  $\mathbb Q(g/f, h/f)=\mathbb Q(X_0(63))$. \footnote{
  This also follows by methods of \cite{Muic2} from  \cite[Corollary 5.8]{MuKo} using Lemma \ref{agr-2}.} The claim follows. Finally, the last claim of the proposition  is also proved. 
\end{proof}

\begin{Rem} Keeping $f$ and $g$ fixed and letting $h$ we have computed a considerable number of examples using MAGMA routines mentioned in the introduction. We always obtained $G_{f, g, h}\simeq S(5)$.
  \end{Rem}

Let $N=  64$. Then, $g(\Gamma_0(64))= 3$. The basis of $S_2(\Gamma_0(63))$ is 
\begin{align*}
f&\overset{def}{=} q^2 - 2q^{10} - 3q^{18} + 6q^{26} + 2q^{34} +\cdots, \\
g&\overset{def}{=} q^5 - 3q^{13} + 5q^{29} + q^{37} + \cdots, \\
h&\overset{def}{=}q - 3q^9 + 2q^{17} - q^{25} + \cdots, \\
\end{align*}
Using SAGE we check that $-f^{4} + h^{3} g + 4 h g^{3}=0$. This implies that $Q_{f, g, h}(\lambda, T)= \lambda T^3 + 4\lambda^3 T -1$. Arguing as in Proposition \ref{sec-3} but reducing
$(\text{mod} \ 2)$   and $(\text{mod} \ 5)$ we easily establish. 

\begin{Prop} \label{sec-4} Maintaining above assumptions, we have $G_{f, g, h}\simeq S(3)$ and $\mathbb Q(g/f, h/f)=\mathbb Q(X_0(64))$. 
  \end{Prop}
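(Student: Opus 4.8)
The plan is to mimic the proof of Proposition \ref{sec-3}, using the explicit polynomial $Q_{f, g, h}(\lambda, T)= \lambda T^3 + 4\lambda^3 T -1$ together with Theorem \ref{hit-main} to detect enough cycle patterns in $G_{f, g, h}$ to force it to equal all of $S(3)$. First I would pass to the normalized polynomial $\wit{Q}_{f, g, h}(\lambda, T) = T^3 + 4\lambda^4 T - \lambda^2$ (multiplying by $\lambda^{2}$ and substituting $T \mapsto T/\lambda$, as prescribed in Section \ref{agr-hit}), which is the polynomial to which Theorem \ref{hit-main} applies. The group $G_{f, g, h}$ is a subgroup of $S(3)$ (by Theorem \ref{sec-1}, since $g(\Gamma_0(64))=3$, or directly since the polynomial has degree $3$); moreover the polynomial genuinely involves $T$ to degree $3$, so $G_{f, g, h}$ is a transitive subgroup of $S(3)$, hence either $\bbZ/3\bbZ$ or $S(3)$.

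To pin it down to $S(3)$ it suffices to exhibit one transposition in $G_{f, g, h}$, i.e.\ by Theorem \ref{hit-main} one prime $p$ and one residue $r$ such that $\wit{Q}_{f, g, h}(r, T) \pmod p$ factors as an irreducible quadratic times a linear factor, all distinct. Following the hint in the text, I would try $p=2$ and $p=5$. Reducing $\wit{Q}_{f, g, h}(\lambda, T) = T^3 + 4\lambda^4 T - \lambda^2$ modulo $2$ gives $T^3 - \lambda^2 = T^3 + \lambda^2 \pmod 2$; for $\lambda \equiv 1 \pmod 2$ this is $T^3+1 = (T+1)(T^2+T+1) \pmod 2$, a product of a linear factor and the irreducible quadratic $T^2+T+1$, with no repeated factor. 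By Theorem \ref{hit-main} this already produces a transposition in $G_{f, g, h}$, so $G_{f, g, h}=S(3)$. (If one wants, reducing modulo $5$ and choosing $\lambda$ so that the cubic is irreducible over $\bbF_5$ — e.g.\ checking that $T^3 + 4T - 1$ has no root mod $5$ — gives a $3$-cycle as well, reconfirming transitivity; but strictly a single transposition together with transitivity already suffices for a subgroup of $S(3)$.)

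It remains to prove $\bbQ(g/f, h/f)=\bbQ(X_0(64))$. By Lemma \ref{agr-8}, $[\bbQ(g/f, h/f):\bbQ(g/f)] = \deg Q_{f, g, h}(g/f, \cdot) = 3$. On the other hand, by Theorem \ref{sec-1} applied with this choice of $f, g$ (both lying in the appropriate $W_i$, with $\nu_\infty(f)<\nu_\infty(g)$), we have $[\bbQ(X_0(64)):\bbQ(g/f)] \le g(\Gamma_0(64)) = 3$. Since $\bbQ(g/f) \subseteq \bbQ(g/f, h/f) \subseteq \bbQ(X_0(64))$ and the outer extension has degree at most $3$ while the inner one has degree exactly $3$, both inclusions are equalities; in particular $\bbQ(g/f, h/f) = \bbQ(X_0(64))$.

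The only genuinely delicate point — and it is minor — is verifying that the orders of vanishing at $\mathfrak a_\infty$ of the chosen $f$ and $g$ (namely $2$ and $5$ from the displayed $q$-expansions) are indeed the two largest possible among forms in $S_2(\Gamma_0(64))_{\bbQ}$, so that Theorem \ref{sec-1} genuinely applies with the bound $g(\Gamma_0(64))$; this is the kind of fact one checks directly in SAGE from the dimensions of the spaces $W_i$, exactly as in the proof of Theorem \ref{sec-1} via \cite[Corollary 5.6]{MuKo}. Everything else is the routine mod-$p$ factorization already indicated, so I do not expect any real obstacle.
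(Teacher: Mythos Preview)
Your argument is correct and follows the same route as the paper, which gives only the one-line hint ``arguing as in Proposition \ref{sec-3} but reducing $(\text{mod}\ 2)$ and $(\text{mod}\ 5)$.'' Your computation of $\wit{Q}_{f,g,h}(\lambda,T)=T^3+4\lambda^4T-\lambda^2$, the mod $2$ factorisation $(T+1)(T^2+T+1)$ yielding a transposition, and the degree count via Theorem \ref{sec-1} to conclude $\mathbb Q(g/f,h/f)=\mathbb Q(X_0(64))$ are all exactly what is intended.

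One factual slip, though harmless for the proof: your optional mod $5$ check is wrong. The polynomial $T^3+4T-1$ \emph{does} have a root in $\mathbb F_5$, namely $T=2$ (since $8+8-1=15\equiv 0$). In fact, for every $\lambda\not\equiv 0\pmod 5$ one has $4\lambda^4\equiv -1$ and $\lambda^2\in\{1,4\}$, and both $T^3-T-1$ and $T^3-T-4$ factor over $\mathbb F_5$ as a linear times an irreducible quadratic. So reduction mod $5$ again produces a transposition, not a $3$-cycle; this is presumably why the paper lists both $p=2$ and $p=5$. As you correctly observe, a $3$-cycle is unnecessary anyway: transitivity of $G_{f,g,h}$ on the three roots is automatic from the irreducibility of $Q_{f,g,h}(g/f,\cdot)$, and a single transposition then forces $G_{f,g,h}=S(3)$.
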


The last set of examples is for $N=72$. We have  $g(\Gamma_0(72))=5$. Using SAGE, the basis  of $5$--dimensional space
$S_2(\Gamma_0(72))$ is given by 
\begin{align*}
f\overset{def}{=}f_0 &\overset{def}{=}q^5 - 2q^{11} - q^{17} + 4q^{23} - 3q^{29} + \cdots,\\
g\overset{def}{=}f_1&\overset{def}{=}q^7 - q^{13} - 3q^{19} + q^{25} + 3q^{31} + 4q^{37} +  \cdots, \\
f_2&\overset{def}{=} q^3 - q^9 - 2q^{15} + q^{27} + 4q^{33} - 2q^{39} + \cdots,\\
f_3 &\overset{def}{=} q^2 - 4q^{14} + 2q^{26} + 8q^{38} + \cdots,\\
f_4&\overset{def}{=} q - 2q^{13} - 4q^{19} - q^{25} + 8q^{31} + 6q^{37} + \cdots.
\end{align*}

In the following proposition, among other things, we  show that  $[\mathbb Q(X_0(72)): \mathbb Q(g/f)]=4$ (which is $< g(\Gamma_0(72))=5$, see Theorem \ref{sec-1}).

\begin{Prop} \label{sec-5} Let  $h=f_3$. Then, we have that  $G_{f, g, h}\simeq D(4)$
  a dihedral group of order $2\cdot 4=8$. Moreover, $\mathbb Q(g/f, h/f)=\mathbb Q(X_0(72))$,  $[\mathbb Q(X_0(72)): \mathbb Q(g/f)]=4$, and $h\in \Xi'_{D(4)}$. Moreover,
  the Galois group of the extension $\mathbb Q(X_0(72)) \subset L_{f, g, h}$ is generated by a transposition in $D(4)$. 
\end{Prop}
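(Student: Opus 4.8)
The plan is to follow the template of Proposition \ref{sec-3}, producing the polynomial $P_{f,g,h}$ in SAGE, passing to $\wit{Q}_{f,g,h}(\lambda,T)$ via the normalization recipe of Section \ref{agr-hit}, and then applying Theorem \ref{hit-main} through suitable reductions mod $p$. First I would compute the reduced equation of $\calC(f,g,h)$ for $h=f_3$; since $[\mathbb Q(X_0(72)):\mathbb Q(g/f)]\le g(\Gamma_0(72))=5$ by Theorem \ref{sec-1}, and we expect to see that the degree of $Q_{f,g,h}(g/f,\cdot)$ comes out to be $4$, I would read off $\deg Q_{f,g,h}(\lambda,\cdot)=4$ directly from $P_{f,g,h}$. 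This already forces $[\mathbb Q(g/f,h/f):\mathbb Q(g/f)]=4$ by Lemma \ref{agr-8}, and combined with $[\mathbb Q(X_0(72)):\mathbb Q(g/f)]\le 5$ one concludes $[\mathbb Q(X_0(72)):\mathbb Q(g/f)]=4$ — here the subtle point is that $4$ must actually equal the whole field degree rather than a proper intermediate one; I would settle this by noting that $4=[\mathbb Q(g/f,h/f):\mathbb Q(g/f)]$ divides $[\mathbb Q(X_0(72)):\mathbb Q(g/f)]\le 5$, and the only multiple of $4$ that is $\le 5$ is $4$ itself, so $\mathbb Q(g/f,h/f)=\mathbb Q(X_0(72))$. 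This simultaneously gives $h\in\Xi'_{D(4)}$ by Definition \ref{agr-12}, once we have identified $G_{f,g,h}\simeq D(4)$.

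Next I would pin down $G_{f,g,h}$ as a transitive subgroup of $S(4)$. The transitive subgroups of $S(4)$ are $S(4)$, $A(4)$, $D(4)$, $V_4$ (the Klein four-group), and $\mathbb Z/4\mathbb Z$. To show the group is $D(4)$ I would use Theorem \ref{hit-main} to exhibit, via reductions $\wit{Q}_{f,g,h}(r,T)\pmod p$ for small primes $p$ and suitable residues $r$, (a) a factorization into a single irreducible quartic, giving a $4$-cycle in $G_{f,g,h}$ — this already eliminates $A(4)$ and $V_4$; and (b) a factorization of the shape (irreducible quadratic)$\times$(linear)$\times$(linear) or (irreducible quadratic)$\times$(irreducible quadratic) giving an element with cycle type $(2,1,1)$ — a transposition would eliminate $\mathbb Z/4\mathbb Z$ and, together with the $4$-cycle, if it were a transposition we would instead get $S(4)$, so care is needed: to land on $D(4)$ rather than $S(4)$ I expect instead to see only cycle types $(4)$, $(2,2)$, $(2,1,1)$ of the specific ``double transposition'' and ``single transposition generating $D_4$'' kind while never producing a $3$-cycle, and the absence of $3$-cycles (equivalently, of a factorization (irreducible cubic)$\times$(linear)) is what rules out $A(4)$ and $S(4)$. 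Since $D(4)$ contains no $3$-cycle and $|D(4)|=8$, matching the observed cycle types against the subgroup lattice of $S(4)$ forces $G_{f,g,h}\simeq D(4)$. I would also, as a cross-check, compute the discriminant of $\wit{Q}_{f,g,h}(g/f,T)$ over $\mathbb Q(g/f)$ and its resolvent cubic; $D(4)$ is characterized among transitive subgroups of $S(4)$ by the resolvent cubic having exactly one root in the base field while the discriminant is not a square, and the MAGMA routine \emph{GaloisGroup} mentioned in the introduction gives an independent confirmation.

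For the final sentence — that $\mathrm{Gal}\big(L_{f,g,h}/\mathbb Q(X_0(72))\big)$ is generated by a transposition in $D(4)$ — I would argue via Galois theory: we have the tower $\mathbb Q(g/f)\subset\mathbb Q(g/f,h/f)=\mathbb Q(X_0(72))\subset L_{f,g,h}$, so by the fundamental correspondence $\mathrm{Gal}(L_{f,g,h}/\mathbb Q(X_0(72)))$ is the stabilizer in $G_{f,g,h}\simeq D(4)\subset S(4)$ of the letter $1$ corresponding to the root $\alpha_1=a_n(g/f)h/f$ which lies in $\mathbb Q(X_0(72))$. A point stabilizer in the standard transitive action of $D(4)$ on $4$ letters has order $|D(4)|/4=2$, and in $D(4)\subset S(4)$ the order-$2$ subgroup fixing a vertex of the square is generated by the reflection through that vertex, i.e.\ by a transposition (the transposition swapping the two neighboring vertices). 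Hence $\mathrm{Gal}(L_{f,g,h}/\mathbb Q(X_0(72)))$ is cyclic of order $2$ generated by a transposition of $D(4)$, as claimed.

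\textbf{Main obstacle.} The computational heart — producing $P_{f,g,h}$ for $h=f_3$ in $S_2(\Gamma_0(72))$ and carrying out enough mod-$p$ reductions to separate $D(4)$ from the other four transitive subgroups of $S(4)$ — is routine but delicate: the real content is finding primes $p$ and residues $r$ realizing the cycle types $(4)$ and $(2,1,1)$ while being sure no reduction ever yields a $3$-cycle (so that $S(4)$ and $A(4)$ are excluded), and verifying the reduced polynomial has distinct irreducible factors each time so that Theorem \ref{hit-main} applies. Establishing that $G_{f,g,h}$ is \emph{exactly} $D(4)$, and not a larger or incomparable group, rather than merely contained in some group, is the step I expect to require the most care.
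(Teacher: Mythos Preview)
Your overall plan matches the paper's: compute $P_{f,g,h}$, normalize to $\wit{Q}_{f,g,h}$, reduce modulo primes, and invoke Theorem~\ref{hit-main}. Your argument that $\mathbb Q(g/f,h/f)=\mathbb Q(X_0(72))$ via $4\mid[\mathbb Q(X_0(72)):\mathbb Q(g/f)]\le 5$ is valid and in fact slightly cleaner than the paper's, which instead checks $\deg P_{f,g,h}=7>g(\Gamma_0(72))-1$ and appeals to the Trial method of \cite{MuKo} together with Lemma~\ref{agr-2}. Your treatment of the final claim (the stabilizer of a letter in the $D(4)$-action has order $2$ and is generated by a transposition) is essentially the same as the paper's.

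There is, however, a genuine gap in your exclusion of $S(4)$ and $A(4)$. You propose to rule these out by ``never producing a $3$-cycle'' in your mod-$p$ reductions. That is not a proof: Theorem~\ref{hit-main} is an if-and-only-if statement, and failure to find a given cycle type in finitely many reductions proves nothing. (You also misstate the role of transpositions: $D(4)\subset S(4)$ \emph{does} contain transpositions, namely $(1\,3)$ and $(2\,4)$, so a $4$-cycle together with a transposition does not force $S(4)$.) The paper's key observation, which you are missing, is that the computed polynomial is biquadratic,
\[
\wit{Q}_{f,g,h}(\lambda,T)=T^4-\lambda^3(1+8\lambda^3)T^2+\lambda^9(1+7\lambda^3-8\lambda^6),
\]
so its roots come in pairs $\pm\alpha$. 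The paper then uses the \emph{only if} direction of Theorem~\ref{hit-main}: if $G_{f,g,h}$ contained a permutation of cycle type $(1,3)$, some reduction mod $p>2$ would factor as (linear)$\times$(irreducible cubic); but a root $a\not\equiv 0$ forces $-a\not\equiv a$ to be a second root, which the cubic factor cannot absorb. This structural argument rules out $S(4)$ and $A(4)$ a priori. Your resolvent-cubic cross-check would also work as a standalone argument, but as written your primary line of reasoning does not establish $G_{f,g,h}\subset D(4)$.
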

\begin{proof} The polynomial $P_{f, g, h}$ is given by
  $$
  f^7 + 7f^4g^3 - 8fg^6 - f^5h^2 - 8f^2g^3h^2 + g^3h^4.
  $$
  This implies
  $$
  \wit{Q}_{f, g, h}(\lambda, T)= T^4 - \lambda^3 (1+8\lambda^3) T^2 + \lambda^9 (1+7\lambda^3 -8\lambda^6).
  $$
  For $\lambda \equiv -1 \ (\text{mod} \ 3)$,  reducing 
  $\equiv \ (\text{mod} \ 3)$, the polynomial $\wit{Q}_{f, g, h}(\lambda, T)$ becomes
  $T^4-T^2-1$ which is irreducible over $\mathbb Z/3\mathbb Z$. Thus, $G_{f, g, h}$ must contain a $4$--cycle (see Theorem \ref{hit-main}). 
   Next, for $\lambda \equiv -1 \ (\text{mod} \ 3)$,  reducing 
  $\equiv \ (\text{mod} \ 5)$, the polynomial $\wit{Q}_{f, g, h}(\lambda, T)$ becomes
   $T^4-2T^2-1$ which is product of irreducible polynomials $T^2-T+2$ and $T^2+T+2$ over  $\mathbb Z/5\mathbb Z$. This shows that the Galois group $G_{f, g, h}$ contains a permutation with a cycle pattern 2, 2.
   Now, we list all transitive groups in $S(4)$ (see http://galoisdb.math.upb.de/groups?deg=4  and \cite{conhulmckay} for notation). They are cyclic group $C(4)$, the Klein group $E(4)$ (order $4$ all
   non--trivial elements have order 2), the dihedral group $D(4)$ (order $8$), the alternating group $A(4)$, and the symmetric group $S(4)$. Clearly, $G_{f, g, h}$  cannot be $C(4)$. It is not also $E(4)$ since
   it contains a four cycle which has order $4$. Also, a four cycle is a product of three transpositions, for example $(1, 2, 3, 4)=(1, 2)(1, 3) (1, 4)$. Thus, $G_{f, g, h}$ cannot be  $A(4)$. Thus, $G_{f, g, h}$
   is either $D(4)$ or $S(4)$. If it would be $S(4)$, the it would contain a permutation with a cycle pattern 1, 3. Then, by Theorem \ref{hit-main}, there would exist a prime number $p > 2$ and $\lambda
   \in \mathbb Z$
   such that   reducing 
  $\equiv \ (\text{mod} \ p)$ the polynomial  $\wit{Q}_{f, g, h}(\lambda, T)$ would be a product of a normalized linear polynomial, say $P_1$, and a cubic normalized irreducible polynomial, say $P_2$,
   over $\mathbb Z/p\mathbb Z$. Clearly, $P_1$  is of the form $T-a$, $a\not\equiv  0 \ (\text{mod} \ p)$. Then, $a$ is a zero of $\wit{Q}_{f, g, h}(\lambda, T)$ modulo $p$. Clearly, $-a$ is also a zero, and
   since $p>2$ we have $a\not\equiv -a \ (\text{mod} \ p)$.
   This is a contradiction since $P_2$ cannot have zeroes in $\mathbb Z/p\mathbb Z$.

   After renumeration, the group $D(4)$ is generated by a $4$--cycle $r=(1 2 3 4)$ and a transposition $s=(1 3)$ subject to the relation $srs=r^{-1}$. The elements of $D(4)$
   are $1, r, r^2, r^3, r^4$, $sr=(2 3)(1 4)$,
   $sr^2=(2 4)$ and $sr^3= (1 2) (3 4)$. Thus, in the chain of field extensions $\mathbb Q(g/f)\subset \mathbb Q(g/f, h/f)\subset L_{f, g, h}$, we see that
   $Gal(L_{f, g, h}/ Q(g/f, h/f))$ must be generated either by $s=(1 3)$ or $sr^2=(2, 4)$  since other permutations do not have fixed points and a root $h/f$ of $Q_{f, g, h}(g/f, T)$ must be fixed by elements of 
   $Gal(L_{f, g, h}/ Q(g/f, h/f))$.

   Finally, since the $\deg{P_{f, g, h}}=7> g(\Gamma_0(72))-1=4$, the Trial method \cite[Section 7]{MuKo} and Lemma \ref{agr-2} imply 
   $\mathbb Q(g/f, h/f)=\mathbb Q(X_0(72))$. Hence, $[\mathbb Q(X_0(72)): \mathbb Q(g/f)]=4$.  In particular,  by Definition \ref{agr-12},  $h\in \Xi'_{D(4)}$.
\end{proof}

\begin{Rem} By the method described in Section \ref{agr-hit} (see Theorem \ref{hit-main}), a transposition in $G_{f, g, h}=D(4)$ can be seen by taking  $\lambda \equiv -1 \ (\text{mod} \ 79)$
  and reducing $ \wit{Q}_{f, g, h}(\lambda, T)$ modulo $p=79$. We obtain the polynomial $T^4-7T^2 +14$ which is a product of two linear polynomials $T\pm 4$ and an irreducible quadratic polynomial
  $T^2+9$. By Theorem \ref{hit-main}, $G_{f, g, h}$ must contain a permutation with a cyclic pattern $ 1, 1,  2$ i.e., a transposition.
  \end{Rem}

 \begin{Prop} \label{sec-6} Let $h=f_3+f_4$. Then, we have $G_{f, g, h}\simeq S(4)$. Moreover, $h\in \Xi'_{S(4)}$, and  $\mathbb Q(g/f, h/f)=\mathbb Q(X_0(72))$. 
  \end{Prop}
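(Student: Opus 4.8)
The plan is to mirror the argument used for Proposition \ref{sec-5}, now with the perturbed form $h=f_3+f_4$. First I would run the Trial method of \cite[Section 7]{MuKo} in SAGE to compute the reduced equation $P_{f,g,h}$ of $\calC(f,g,h)$, check that $h\notin \mathbb Q f+\mathbb Q g$, and verify that $\deg P_{f,g,h}>g(\Gamma_0(72))-1=4$. Combined with Lemma \ref{agr-2}, this yields $\mathbb Q(g/f,h/f)=\mathbb Q(X_0(72))$. Since $f$ and $g$ here are exactly the forms of Proposition \ref{sec-5}, where it was shown that $[\mathbb Q(X_0(72)):\mathbb Q(g/f)]=4$, Lemma \ref{agr-8} forces $\deg Q_{f,g,h}(g/f,\cdot)=4$; in particular $G_{f,g,h}$ is realized as a transitive subgroup of $S(4)$, and by Definition \ref{agr-12} one gets $h\in\Xi'_{S(4)}$ as soon as $G_{f,g,h}$ is identified with $S(4)$.

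Next I would pass to the normalized polynomial $\wit{Q}_{f,g,h}(\lambda,T)\in\mathbb Z[\lambda,T]$ and apply Theorem \ref{hit-main} by reducing modulo small primes. I expect to exhibit: (a) a prime $p_1$ and a residue $r_1$ for which $\wit{Q}_{f,g,h}(r_1,T)\bmod p_1$ is irreducible of degree $4$, so that $G_{f,g,h}$ contains a $4$--cycle; and (b) a prime $p_2$ and a residue $r_2$ for which $\wit{Q}_{f,g,h}(r_2,T)\bmod p_2$ splits as a linear factor times an irreducible cubic, so that $G_{f,g,h}$ contains a $3$--cycle (cycle pattern $1,3$). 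Running through the transitive subgroups of $S(4)$ exactly as in the proof of Proposition \ref{sec-5} — namely $C(4)$, $E(4)$, $D(4)$, $A(4)$, $S(4)$ — a transitive subgroup containing both an odd permutation (the $4$--cycle) and a $3$--cycle cannot be $C(4)$, $E(4)$, $D(4)$ (no $3$--cycle) or $A(4)$ (no $4$--cycle), hence must be $S(4)$. Alternatively one can locate a transposition (cycle pattern $1,1,2$), as in the remark following Proposition \ref{sec-5}, and combine it with the $3$--cycle, since $A(4)$ has no transposition.

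The routine but essential part is the explicit SAGE computation of $P_{f,g,h}$ and the search for the two good primes; I expect the main obstacle to be purely bookkeeping — ensuring that the chosen residues $r_j$ may be lifted to $\lambda\in\mathbb Z-A_{f,g,h}$ (automatic by the proof of Theorem \ref{hit-main}) and that each mod--$p$ factorization is into \emph{distinct} irreducible factors, which one confirms by a $\gcd$-with-derivative check. Once $G_{f,g,h}\simeq S(4)$ is established, the remaining assertions $h\in\Xi'_{S(4)}$ and $\mathbb Q(g/f,h/f)=\mathbb Q(X_0(72))$ follow at once from the first paragraph.
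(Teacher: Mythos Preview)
Your proposal is correct and follows essentially the same approach as the paper: compute $P_{f,g,h}$ via SAGE, pass to $\wit{Q}_{f,g,h}$, and apply Theorem \ref{hit-main} to exhibit a $4$--cycle and a permutation of cycle type $1,3$, then eliminate all transitive subgroups of $S(4)$ other than $S(4)$ itself. The paper uses the specific reductions $\lambda\equiv 1\ (\text{mod}\ 3)$ and $\lambda\equiv 1\ (\text{mod}\ 7)$ for parts (a) and (b), and handles the remaining claims exactly as you outline.
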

 \begin{proof} The proof is very similar to the proof of Proposition \ref{sec-5}, but easier. So, we sketch the argument. First, using SAGE, we find that
   $P_{f, g, h}$ is given by
\begin{align*}
& f^8 - f^7g + 8f^5g^3 - 7f^4g^4 + 8fg^7 - 16g^8 -
2f^5g^2h + 8f^3g^4h - 16f^2g^5h + \\&32g^7h - f^6h^2 +
f^5gh^2 - 8f^3g^3h^2 +
8f^2g^4h^2 - 24g^6h^2 +
2f^3g^2h^3 + 8g^5h^3 - g^4h^4. 
\end{align*}
Hence, $\wit{Q}_{f, g, h}(\lambda, T)$ is given by 
\begin{align*}
& -\lambda^{12}( 1 - \lambda + 8\lambda^3 - 7\lambda^4 + 8\lambda^7 - 16\lambda^8)  + \lambda^8
  (2\lambda^2 + 8\lambda^4 - 16\lambda^5 + 32\lambda^7)T -\\&
 - \lambda^4 (- 1 + \lambda  - 8\lambda^3 +   8\lambda^4 - 24\lambda^6)T^2 +
  (2\lambda^2 + 8\lambda^5) T^3 + T^4
\end{align*}

For $\lambda \equiv 1 \ (\text{mod} \ 3)$,  reducing 
$\equiv \ (\text{mod} \ 3)$, the polynomial $\wit{Q}_{f, g, h}(\lambda, T)$ becomes $1-T+ T^3+ T^4$. This polynomial is irreducible
over $\mathbb Z/3\mathbb Z$. Hence, by Theorem \ref{hit-main}, the group $G_{f, g, h}$ contains a four cycle.
Similarly for $\lambda \equiv 1 \ (\text{mod} \ 7)$,  reducing 
$\equiv \ (\text{mod} \ 7)$, the polynomial $\wit{Q}_{f, g, h}(\lambda, T)$ becomes $(-2 + 3T + 3T^2  +T^3)\cdot T$ a product of two irreducible polynomials.
Hence, $G_{f, g, h}$ contains a permutation with a cycle patter $1, 3$. Now, as in the proof of Proposition \ref{sec-5},
in view of  list all transitive groups in $S(4)$, we see that $G_{f, g, h}$ must be $S(4)$. The other claims follow by the same arguments as the analogous claims in
Proposition \ref{sec-5}. 
\end{proof}

 \begin{Prop} \label{sec-7} Maintaining above assumptions, let $h=f_2$. Then, we have $G_{f, g, h}\simeq \mathbb Z/ 2\mathbb Z$. Next, $h\in \Xi_{ \mathbb Z/ 2\mathbb Z}$ but
   $\Xi'_{ \mathbb Z/ 2\mathbb Z}=\emptyset$. 
\end{Prop}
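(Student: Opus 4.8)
The plan is to proceed exactly as in the proofs of Propositions \ref{sec-3}, \ref{sec-5}, and \ref{sec-6}: compute in SAGE the reduced equation $P_{f,g,h}$ of the curve $\calC(f,g,h)$ for $h=f_2$, dehomogenize to obtain $Q_{f,g,h}(\lambda,T)$, and then inspect its degree in $T$. Since $f=f_0$ and $g=f_1$ were chosen with largest possible orders of vanishing at $\mathfrak a_\infty$, the relation $P_{f,g,h}(f,g,h)=0$ will, because of the many cancellations in $q$-expansions exhibited in the proof of Theorem \ref{sec-1}, force $P_{f,g,h}$ to have low degree; I expect $\deg_T Q_{f,g,h}(\lambda,\cdot)=2$. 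Once the degree is $2$, Lemma \ref{agr-8} gives $[\mathbb Q(g/f,h/f):\mathbb Q(g/f)]=2$, and the splitting field of a quadratic is the field itself, so $G_{f,g,h}\simeq \mathbb Z/2\mathbb Z$ (it cannot be trivial, since $h\notin\mathbb Q f+\mathbb Q g$ forces $Q$ to genuinely depend on $T$, hence the degree is at least $2$ and the extension is nontrivial — one checks the discriminant is not a square in $\mathbb Q(g/f)$, equivalently the reduced equation is irreducible over $\mathbb Q$ as produced by the construction in Definition \ref{agr-9}).

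Next I would verify $h\in\Xi_{\mathbb Z/2\mathbb Z}$, which by Definition \ref{agr-12} is immediate from $G_{f,g,h}\simeq\mathbb Z/2\mathbb Z$ together with $h\in S_2(\Gamma_0(72))_{\mathbb Q}-(\mathbb Q f+\mathbb Q g)$ (clear from the $q$-expansions, as $f_0,f_1,f_2$ are basis vectors). The substantive point is $\Xi'_{\mathbb Z/2\mathbb Z}=\emptyset$. By Definition \ref{agr-12}, $h'\in\Xi'_{\mathbb Z/2\mathbb Z}$ would require $\deg Q_{f,g,h'}(g/f,\cdot)=[\mathbb Q(X_0(72)):\mathbb Q(g/f)]$; but by Lemma \ref{agr-13}, $\Xi'_G\neq\emptyset$ forces $|G|$ to be divisible by $[\mathbb Q(X_0(72)):\mathbb Q(g/f)]$. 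So it suffices to show $[\mathbb Q(X_0(72)):\mathbb Q(g/f)]$ does not divide $2$, i.e. is at least $3$. From Proposition \ref{sec-5} we already know $[\mathbb Q(X_0(72)):\mathbb Q(g/f)]=4$ (that proposition used the same $f=f_0$, $g=f_1$), so $4\nmid 2$ and hence $\Xi'_{\mathbb Z/2\mathbb Z}=\emptyset$.

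The main obstacle is purely computational: correctly running the Trial/SAGE computation to obtain $P_{f,g,h}$ for $h=f_2$ and confirming that $Q_{f,g,h}(g/f,\cdot)$ has degree exactly $2$, i.e. that taking $h=f_2$ does \emph{not} give a birational model (in contrast with $h=f_3$ in Proposition \ref{sec-5}). Conceptually there is nothing hard left: the assertion $\Xi'_{\mathbb Z/2\mathbb Z}=\emptyset$ is a formal consequence of the divisibility constraint in Lemma \ref{agr-13} combined with the value $[\mathbb Q(X_0(72)):\mathbb Q(g/f)]=4$ already established in Proposition \ref{sec-5}. One should also double-check that reducing $\wit Q_{f,g,h}(\lambda,T)$ modulo a small prime (e.g. mod $3$ or mod $5$, with a suitable residue for $\lambda$) yields a product of two distinct linear factors, which via Theorem \ref{hit-main} exhibits a transposition in $G_{f,g,h}$ and thus confirms $G_{f,g,h}=S(2)=\mathbb Z/2\mathbb Z$ rather than the trivial group — this matches the method used throughout Section \ref{cnmc}.
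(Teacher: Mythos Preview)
Your proposal is correct and follows essentially the same approach as the paper: compute $P_{f,g,h}$ in SAGE (the paper finds $-f^2h+gh^2-2fg^2=0$, so $Q_{f,g,h}(\lambda,T)=\lambda T^2-T-2\lambda^2$), observe $\deg_T=2$ and use irreducibility to get $G_{f,g,h}\simeq\mathbb Z/2\mathbb Z$ (the paper confirms this via Theorem~\ref{hit-main} by reducing $\wit Q(\lambda,T)=T^2-T-2\lambda^3$ mod~$2$ with $\lambda\equiv 0$), and then invoke $[\mathbb Q(X_0(72)):\mathbb Q(g/f)]=4$ from Proposition~\ref{sec-5} to rule out $\Xi'_{\mathbb Z/2\mathbb Z}\neq\emptyset$. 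One small caution: your parenthetical ``$h\notin\mathbb Q f+\mathbb Q g$ forces \ldots\ the degree is at least $2$'' is not quite right (linear independence only forces $\deg_T\ge 1$); what actually gives $G_{f,g,h}\neq 1$ is that $Q_{f,g,h}$ is irreducible by construction (Definition~\ref{agr-9}) together with the computed $\deg_T=2$.
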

 \begin{proof} As before, using SAGE, we find  that $-f^2h+ gh^2-2fg^2=0$. Hence, we see $Q_{f, g, h}(\lambda, T)= \lambda T^2 - T -2\lambda^2$. Hence,
   $\wit{Q}_{f, g, h}(\lambda, T)=T^2 - T -2\lambda^3$. 
  The result easily follows. For example, for $\lambda \equiv 0 \ (\text{mod} \ 2)$,  reducing 
  $\equiv \ (\text{mod} \ 2)$, the polynomial $\wit{Q}_{f, g, h}(\lambda, T)$ becomes a product $T\cdot (T-1)$. Hence, $G_{f, g, h}\simeq S(2) \simeq \mathbb Z/ 2\mathbb Z$. In particular,
  $h\in \Xi_{ \mathbb Z/ 2\mathbb Z}$.    Finally, for $h'\in \Xi_{ \mathbb Z/ 2\mathbb Z}$, the degree of the extension $\mathbb Q(g/f)\subset \mathbb Q(g/f, h'/f)$ is two while
  $\mathbb Q(g/f)\subset \mathbb Q(X_0(72))$ has degree four by Proposition \ref{sec-5}.
  Hence, $\Xi'_{ \mathbb Z/ 2\mathbb Z}=\emptyset$ (see Definition \ref{agr-12}).
  \end{proof}

 \begin{Rem} \label{sec-8} Using  http://galoisdb.math.upb.de/groups?deg=4  (see \cite{conhulmckay} for notation), we may conclude  that $\Xi_{C(4)}=\Xi_{E(4)}=\emptyset$ (see Definition \ref{agr-12}). Indeed,
   let $h'\in  \Xi_{C(4)}$ or $h'\in  \Xi_{E(4)}$. Then,
   the degree $[\mathbb Q(g/f, h'/f): \mathbb Q(f/g)]$ obviously must divide  $[\mathbb Q(X_0(72)): \mathbb Q(f/g)]=4$ (see Proposition \ref{sec-5}). Thus, $[\mathbb Q(g/f, h'/f): \mathbb Q(f/g)]=4$
   (or otherwise  $L_{f, g,h'}= \mathbb Q(g/f, h'/f)$ contradicting that it must be a degree four extension of $\mathbb Q(f/g)$).  We obtain that $\mathbb Q(g/f, h'/f)=\mathbb Q(X_0(72))$ is a Galois extension of
   $\mathbb Q(f/g)$ with the Galois group
   $C(4)$. On the other hand, using $h$ from Proposition \ref{sec-6} we have the Galois extension $L_{f, g, h}$ of  $\mathbb Q(f/g)$ with the Galois group
   $S(4)$  containing $\mathbb Q(X_0(72))$. Now, by Galois theory we conclude that $S(4)$ has normal group with a quotient of order four. But this is a contradiction since $S(4)$ has
   no normal subgroup of order $4!/4=6$. 
 \end{Rem}

\section{The Proof of Lemma \ref{cmcg-2}} \label{tpoal}
In this section we prove the following lemma used in the proof of Theorem \ref{cmcg-1}:

\begin{Lem} \label{cmcg-2} Maintaining the assumptions of Theorem \ref{cmcg-1}, we have
  that $\mathbb C\left(g/f\right) \subset \mathbb C\left(X_0(N)\right)$ is a degree two extension.
\end{Lem}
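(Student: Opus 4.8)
The plan is to analyze the divisor of zeroes of $g/f$ on the hyperelliptic curve $X_0(N)$ directly, using the hyperelliptic structure together with the specific choice of $f$ and $g$ as cusp forms of extremal vanishing order at $\mathfrak a_\infty$. Recall that on a hyperelliptic curve of genus $g(\Gamma_0(N))\ge 2$ there is a unique (up to automorphism) degree two map $\pi\colon X_0(N)\to \mathbb P^1$, and a rational function defines a degree two map precisely when its divisor of zeroes (equivalently poles) has degree two. So the task reduces to showing $\deg(\mathrm{div}_0(g/f))=2$.

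First I would recall from \cite[Lemma 2.1]{Muic2} that for a nonzero cusp form $h_1\in S_2(\Gamma_0(N))$ the associated effective divisor $\mathfrak c_{h_1}$ — which is $\mathrm{div}(h_1)$ viewed as a divisor on $X_0(N)$ via the canonical class interpretation of weight two forms — has degree $2g(\Gamma_0(N))-2$, and that its coefficient at $\mathfrak a_\infty$ is $\nu_{\mathfrak a_\infty}(h_1)-1$. By the choice of $f$ and $g$ we have $\nu_{\mathfrak a_\infty}(g)=\nu_{\mathfrak a_\infty}(f)+1$, with $\nu_{\mathfrak a_\infty}(g)$ maximal among all nonzero forms in $S_2(\Gamma_0(N))_{\mathbb Q}$. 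By (\ref{agr-10}), $\mathrm{div}(g/f)=\mathfrak c_g-\mathfrak c_f$, so the degree of the divisor of zeroes of $g/f$ equals $(2g(\Gamma_0(N))-2)-(\nu_{\mathfrak a_\infty}(f)-1)$ minus any further cancellation coming from common zeroes of $f$ and $g$ away from $\infty$; equivalently it equals $\deg(\gcd(\mathfrak c_f,\mathfrak c_g))$ subtracted appropriately. The key numerical input I expect to need is that the maximal vanishing order $\nu_{\mathfrak a_\infty}(g)$ equals $g(\Gamma_0(N))$ — i.e. $\nu_{\mathfrak a_\infty}(f)=g(\Gamma_0(N))-1$ — which is the standard gap/non-gap statement: the Weierstrass semigroup of $X_0(N)$ at the cusp $\mathfrak a_\infty$ begins with the hyperelliptic non-gap $2$, and the space $W_i$ of cusp forms vanishing to order $\ge i$ has the expected dimension, giving a form with $\nu_{\mathfrak a_\infty}=g(\Gamma_0(N))$ but none with $\nu_{\mathfrak a_\infty}=g(\Gamma_0(N))+1$ in degree bounded correctly. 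Plugging this in gives $\deg(\mathrm{div}_0(g/f))\le (2g(\Gamma_0(N))-2)-(g(\Gamma_0(N))-2)=g(\Gamma_0(N))$, which is not yet $2$, so a cruder count is insufficient.

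To get the sharp bound I would instead argue via the hyperelliptic involution $w$: the extremal form $g$ vanishes at $\mathfrak a_\infty$ to order $g(\Gamma_0(N))$, and since weight two cusp forms are holomorphic differentials, $\mathrm{div}(g)$ as a differential is the canonical divisor; on a hyperelliptic curve the canonical class is $\pi^*\mathcal O(g(\Gamma_0(N))-1)$, so $\mathrm{div}(g)=\pi^*(D)$ for an effective divisor $D$ of degree $g(\Gamma_0(N))-1$ on $\mathbb P^1$, and similarly for $f$. Since $g/f$ is then $\pi^*$ of a degree $\le 1$ rational function on $\mathbb P^1$ whose pole divisor must be nonzero (as $g/f$ is nonconstant), $g/f=\pi^*(\phi)$ with $\phi$ of degree exactly $1$ on $\mathbb P^1$; hence $\deg(\mathrm{div}_0(g/f))=2\cdot 1=2$. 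Therefore $[\mathbb C(X_0(N)):\mathbb C(g/f)]=2$, and in fact $\mathbb C(g/f)=\pi^*\mathbb C(\mathbb P^1)$ is the hyperelliptic subfield.

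\textbf{Main obstacle.} The delicate point is verifying that $f$ and $g$, with their prescribed extremal orders at $\mathfrak a_\infty$, really are both pullbacks under the hyperelliptic $\pi$ — equivalently, that $\mathfrak a_\infty$ is a Weierstrass point and that the extremal-order condition forces $\mathrm{div}(f)$ and $\mathrm{div}(g)$ to be $w$-invariant. This should follow from Ogg's analysis \cite{Ogg} of hyperelliptic $X_0(N)$ (in all these cases $\mathfrak a_\infty$ is known to be fixed by $w$, or the relevant Atkin–Lehner involution realizing the hyperelliptic map), combined with the fact that the canonical embedding of a hyperelliptic curve factors through $\pi$, so that \emph{every} holomorphic differential is a $\pi$-pullback. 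I would cite this structure and the dimension count for $W_i$ (via \cite[Corollary 5.6]{MuKo} or the explicit dimension formula \cite[Lemma 2.2]{MuKo}) to pin down $\nu_{\mathfrak a_\infty}(f)=g(\Gamma_0(N))-1$, and then the pullback description of the canonical class finishes the proof cleanly. If one prefers to avoid invoking $w$-invariance of $\mathfrak a_\infty$, an alternative is purely field-theoretic: $\mathbb C(g/f)$ has index $\le g(\Gamma_0(N))$ in $\mathbb C(X_0(N))$ by the above, $g/f$ is nonconstant, and one shows the minimal polynomial of any generator of $\mathbb C(X_0(N))$ over $\mathbb C(g/f)$ must have degree dividing the hyperelliptic degree $2$ because $\mathbb C(g/f)$ contains the hyperelliptic subfield — this containment being exactly what the pullback computation provides.
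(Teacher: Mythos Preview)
Your approach is genuinely different from the paper's and, once repaired, more conceptual: the paper handles genus $2$ by a direct degree argument and for $g(\Gamma_0(N))\ge 3$ proceeds by a case--by--case SAGE computation of the plane conic $\calC(f_{g-3},f_{g-2},f_{g-1})$, while you exploit the fact that on a hyperelliptic curve the canonical map factors through the degree--$2$ map $\pi$, so every holomorphic differential is $\pi^*$ of a section of $\calO_{\bbP^1}(g-1)$ and hence $g/f=\pi^*\phi$ for some $\phi\in\bbC(\bbP^1)$.

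However, there is a real gap and a mis--identification. The gap is the sentence ``$g/f$ is then $\pi^*$ of a degree $\le 1$ rational function on $\bbP^1$'': nothing you have written forces $\deg\phi\le 1$; a priori $\phi=s_g/s_f$ with $s_f,s_g\in H^0(\bbP^1,\calO(g-1))$ can have degree up to $g-1$. What actually pins down $\deg\phi=1$ is the extremal vanishing hypothesis, and here your identification is backwards: the assumption that the \emph{consecutive} orders $\nu_{\frak a_\infty}(g)-1$ and $\nu_{\frak a_\infty}(g)-2$ both occur as differential vanishing orders at $\frak a_\infty$ forces $\frak a_\infty$ to be a \emph{non}--Weierstrass point (at a hyperelliptic Weierstrass point the differential orders are $0,2,4,\ldots,2g-2$, never consecutive). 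Given that, $\pi$ is unramified at $\frak a_\infty$, so the vanishing order of $\pi^*s$ at $\frak a_\infty$ equals that of $s$ at $p_\infty:=\pi(\frak a_\infty)$; maximality of $\nu_{\frak a_\infty}(g)$ then gives $\mathrm{div}(s_g)=(g-1)p_\infty$ and $\mathrm{div}(s_f)=(g-2)p_\infty+q$ with $q\ne p_\infty$, whence $\mathrm{div}(\phi)=p_\infty-q$ and $\deg\phi=1$. This is the missing computation.

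Finally, your ``main obstacle'' is a non--issue: \emph{every} holomorphic differential on a hyperelliptic curve is a $\pi$--pullback (this is exactly the statement that the canonical map factors through $\pi$), so there is nothing to verify about $f$ and $g$ individually, and no need to invoke Ogg or $w$--invariance of $\frak a_\infty$. With the degree--$1$ step filled in as above, your argument is complete and avoids the paper's case--by--case SAGE verification.
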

\begin{proof} This claim is  contained in \cite[Lemma 2]{mshi} but we give  different  proof
using our previous work \cite{Muic2} combined with certain simple computations in SAGE.

First, we assume that we have $g(\Gamma_0(N))=2$ i.e.,
$N=22, 23, 26, 28, 29, 31, 37, 50$. 
We use results recalled in the proof of Theorem \ref{agr-1} up to the formula in (\ref{agr-5}).
The divisor of zeroes must be $\mathfrak c_g$ since otherwise the divisor of zeroes would have degree one, and we would obtain
$\mathbb C(X_0(N))=\mathbb C(g/f)$. Hence,  $X_0(N)$ would have a  genus $0$ which is a
contradiction. This proves  (\ref{cmcg-2}) when  $g(\Gamma_0(N))=2$. Next, we assume  that $g(\Gamma_0(N))\ge 3$ i.e.,
\begin{equation}\label{cmcg-4-N}
  N=30,  33, 35,  39, 40, 41, 46, 47, 48,  59, 71.
\end{equation}
Then, above method does not work since the degree of divisors $\mathfrak c_f$ and $\mathfrak c_g$ is $2(g(\Gamma_0)N))-1)\ge 4$.
We need to dig deeper into the geometry and computations in SAGE. We write $f_0, \ldots, f_{g(\Gamma_0(N))-1}$ for the basis of $S_2(\Gamma_0(N))$. By Eichler--Shimura theory
\cite[Theorem 3.5.2]{shi} we see that they have integral $q$--expansions, and using SAGE system we can easily check that their $q$--expansions are of the form
\begin{equation}\label{cmcg-4-q}
f_i=a_iq^{i+1}+ \cdots,  \ \ 0\le i\le g(\Gamma_0(N))-1.
\end{equation}
where $a_i\neq 0$. By definition of $f$ and $g$, we see that $g=\lambda f_{g(\Gamma_0(N))-1}$ and $f=\mu f_{g(\Gamma_0(N))-2}+ \nu f_{g(\Gamma_0(N))-1}$, 
for some $\lambda, \mu, \nu \in\mathbb Q$, $\lambda, \mu \neq 0$. So, in  order to prove (\ref{cmcg-2}), we may assume $f=f_{g(\Gamma_0(N))-2}$, and $g=f_{g(\Gamma_0(N))-1}$.

In what follows we do not need letter $g$ to denote a cupidal form, so we may put $g=g(\Gamma_0(N))$. We consider the analogue of the map (\ref{map}) given by
\begin{equation}\label{cmcg-3}
\mathfrak{a}_z\longmapsto (f_{g-3}(z): f_{g-2}(z): f_{g-1}(z)), \ \ z\in \mathbb H.
\end{equation}
Let $d(f_{g-3}, f_{g-2},  f_{g-1})$ be the degree of that map (it is explained in detail in \cite{Muic2}). We recall that $d(f_{g-3}, f_{g-2},  f_{g-1})$  is a degree of the field extension
$\mathbb C\left(\calC(f_{g-3}, f_{g-2}, f_{g-1})\right)\subset \mathbb C\left(X_0(N)\right)$. 
This means that the map (\ref{cmcg-3}) is generically $d$--to--$1$ map (see \cite[Lemma 3.4]{Muic2}). We prove
\begin{equation}\label{cmcg-4-even}
\text{$d(f_{g-3}, f_{g-2},  f_{g-1})$ is an even integer.}
\end{equation}
In order to prove (\ref{cmcg-4-even}),  let $K$ be the canonical divisor on $X_0(N)$  i.e., a divisor attached to a non--zero meromorphic differential form on $X_0(N)$.
The regular map attached to $K$ is is explicitly described in \cite{sgal}: 
\begin{equation}\label{cmcg-4}
X_0(N)\longrightarrow \mathbb P^{g-1}, \ \ \mathfrak a_z \longmapsto \left(f_0(z): f_1(z): \cdots : f_{g-1}\right), \ \ z\in \mathbb H.
\end{equation}
 The image is an irreducible projective curve, say $\calC(f_0, \ldots, f_{g-1})$. It is well--known that this maps is generically two--to--one since our curve is hyperelliptic.
This means that the degree of the field extension $\mathbb C\left(\calC(f_0, \ldots, f_{g-1})\right) \subset \mathbb C\left(X_0(N)\right)$
is equal to $2$. 

The map (\ref{cmcg-3}) can be considered as a rational map (i.e., defined on a open subset), and factored as a dominant rational map in  a composition of 
the map (\ref{cmcg-4}) followed by the restriction of the projection  $\mathbb P^{g-1}\longrightarrow \mathbb P^2$ given by 
$(y_0: y_1:\cdots: y_{g-1})\longrightarrow (y_{g-3}: y_{g-2}: y_{g-1})$
which is defined on a non--empty Zariski open set of the image $\calC(f_0, \ldots, f_{g-1})$. (For example, on the complement of the (finite) set of zeroes in $\mathbb H$ of the cuspidal modular form
$f_{g-3}f_{g-2}f_{g-1}$ of weight $6$.) Let $d$ be the degree of the projection restricted to the curve $\calC(f_0, \ldots, f_{g-1})$. By definition, this is a degree of the field
extension $\mathbb C\left(\calC(f_{g-3}, f_{g-2}, f_{g-1})\right) \subset \mathbb C\left(\calC(f_0, \ldots, f_{g-1})\right)$.
Thus, considering chain field extensions
$\mathbb C\left(\calC(f_{g-3}, f_{g-2}, f_{g-1})\right) \subset \mathbb C\left(\calC(f_0, \ldots, f_{g-1})\right)\subset 
 \mathbb C\left(X_0(N)\right)$, 
we see that $d(f_{g-3}, f_{g-2},  f_{g-1})=2d$. This proves (\ref{cmcg-4-even}).

Next, by the main result of \cite{Muic2}, the product $d(f_{g-3}, f_{g-2}, f_{g-1}) \cdot \deg{P_{f_{g-3}, f_{g-2}, f_{g-1}}}$ is given by 
  $$
  2(g -1) - \sum_{\mathfrak a\in X_0(N)} 
\min{\left(\mathfrak c_{f_{g-3}}(\mathfrak a),  \mathfrak c_{f_{g-2}}(\mathfrak a), 
  \mathfrak c_{f_{g-1}}(\mathfrak a) \right)},
$$
where effective divisors $\mathfrak c_{f_{g-3}}, \mathfrak c_{f_{g-2}}$, and $\mathfrak c_{f_{g-1}}$ are defined by \cite[Lemma 2.1]{Muic2}.
By (\ref{cmcg-4-q}), we have
$$
\sum_{\mathfrak a\in X_0(N)} 
\min{\left(\mathfrak c_{f_{g-3}}(\mathfrak a),  \mathfrak c_{f_{g-2}}(\mathfrak a), 
  \mathfrak c_{f_{g-1}}(\mathfrak a) \right)}\ge \min{\left(\mathfrak c_{f_{g-3}}(\mathfrak a_\infty),  \mathfrak c_{f_{g-2}}(\mathfrak a_\infty), 
  \mathfrak c_{f_{g-1}}(\mathfrak a_\infty) \right)}=g-3.
$$

Thus, we obtain the following:
\begin{equation} \label{cmcg-5}
d(f_{g-3}, f_{g-2}, f_{g-1}) \cdot \deg{P_{f_{g-3}, f_{g-2}, f_{g-1}}} \le 2(g-1)-(g-3)= g+1.
\end{equation}

Considering case by case analysis based on (\ref{cmcg-4-N}) (described below), we  compute equations in SAGE, and   find  that $\deg{P_{f_{g-3}, f_{g-2}, f_{g-1}}}=2$
for all $N$ in  (\ref{cmcg-4-N}). Using (\ref{cmcg-4-even}) and (\ref{cmcg-5}), we obtain  $d(f_{g-3}, f_{g-2}, f_{g-1})=2$ in all cases. This means that
$\mathbb C\left(X_0(N)\right)$ is a quadratic extension of $\mathbb C\left(\calC(f_{g-3}, f_{g-2}, f_{g-1})\right)$.  Since the explicit computations below show
$\mathbb C\left(\calC(f_{g-3}, f_{g-2}, f_{g-1})\right)$ is equal to  $\mathbb C\left(f_{g-1}/f_{g-2}\right)$, the proof lemma is complete.

Now, we give case by case analysis mentioned above. Assume $N=30$. We have $g=3$. The equation of $\mathcal C(f_{g-3}, f_{g-2}, f_{g-1})$ is
given by the irreducible polynomial $-x^2_1+ x_0x_2-x_1x_2$. We have
$$
\mathbb C\left(\calC(f_{g-3}, f_{g-2}, f_{g-1})\right)=\mathbb C\left(f_{g-3}/f_{g-2}, f_{g-1}/f_{g-2} \right)=  \mathbb C\left(f_{g-1}/f_{g-2}\right)
  $$
since the equation of the curve implies $-f^2_{g-2}+f_{g-3}f_{g-1} - f_{g-2}f_{g-1}=0$.  Analogously, we treat remaining cases.  We leave  details to the
reader. 

Assume that  $N=33, 35, 39, 40, 41$, or $48$. We have $g=3$. The equations of curves $\mathcal C(f_{g-3}, f_{g-2}, f_{g-1})$ are given by the irreducible polynomials
$-x^2_1+ x_0x_2- 2x^2_2$,  $-x^2_1-x_0x_2+2x_1x_2-x^2_2$, $-x^2_1 +x_0x_2 +x_1x_2$, $-x^2_1+x_0x_2+x^2_2$, $-x_1^2+x_0x_2+2x_1x_2$, and $-x^2_1+x_0x_2$, respectively.

Assume  $N=47$. Then, $g=4$.  The equation of $\mathcal C(f_{g-3}, f_{g-2}, f_{g-1})$ is given by the irreducible polynomial $-x^2_1+x_0x_2+ 2x_1x_2-3x^2_2$.   

Assume that  $N=46$ or $59$. Then, $g=5$.  The equations of $\mathcal C(f_{g-3}, f_{g-2}, f_{g-1})$ are  given by the irreducible polynomials $-x^2_1+ x_0x_2-x_1x_2$ and $-x^2_1+x_0x_2+ 2x_1x_2-3x^2_2$,
respectively.

Assume $N=71$. Then, $g=6$.  The equation of $\mathcal C(f_{g-3}, f_{g-2}, f_{g-1})$ is given by the irreducible polynomial $-x^2_1+x_0x_2+ 2x_1x_2-3x^2_2$.
\end{proof}

\end{document}